\newcommand{\eg}{{\it e.g.}}
\newcommand{\ie}{{\it i.e.}}
\newcommand{\etc}{{\it etc.}}
\newcommand{\BA}{\begin{array}}
\newcommand{\EA}{\end{array}}
\newcommand{\BIT}{\begin{itemize}}
\newcommand{\EIT}{\end{itemize}}
\newcommand{\dom}{\mathop{\bf dom}}
\newcommand{\reals}{{\mathbb{R}}} %
\newcommand{\argmin}{\mathop{\rm argmin}}
\newcommand{\dist}{\mathop{\bf dist}}
\newcommand{\Cc}{\mathcal{C}}
\newcommand{\Ac}{\mathcal{A}}
\newcommand{\Fc}{\mathcal{F}}
\newcommand{\Ic}{\mathcal{I}}
\newcommand{\Qc}{\mathcal{Q}}
\newcommand{\Kc}{\mathcal{K}}
\newcommand{\Tc}{\mathcal{T}}
\title{Operator splitting for a homogeneous embedding of the linear
complementarity problem}
\author{Brendan O'Donoghue \\ Deepmind, London, UK}
\begin{document}
\maketitle

\begin{abstract}
We present a first-order quadratic cone programming (QCP) algorithm that can
scale to very large problem sizes and produce modest accuracy solutions quickly.
Our algorithm returns primal-dual optimal solutions when available or
certificates of infeasibility otherwise.  It is derived by applying
Douglas-Rachford splitting to a homogeneous embedding of the linear
complementarity problem,  which is a general set membership problem that
includes QCPs as a special case. Each iteration of our procedure requires
projecting onto a convex cone and
solving a linear system with a fixed coefficient matrix.
If a sequence of related problems are solved then the procedure
can easily be warm-started and make use of factorization caching of the linear
system.  We demonstrate on a range of public and synthetic datasets that for
feasible problems our approach tends to be somewhat faster than applying
operator splitting directly to the QCP, and in cases of infeasibility our
approach can be significantly faster than alternative approaches based on
diverging iterates.  The algorithm we describe has been implemented in C
and is available open-source in the solver SCS v3.0.

\end{abstract}

\begin{keywords}
quadratic programming, cone programming, complementarity problems, monotone
operators, operator splitting, Douglas-Rachford splitting, ADMM, first-order
methods, homogeneous embeddings
\end{keywords}

\begin{AMS}
49M05,
49M29,
65K05,
65K10,
90C05,
90C06,
90C20,
90C22,
90C25,
90C30,
90C33,
90C46
\end{AMS}

\section{Introduction}
The goal in a linear complementarity problem (LCP) is to find a point in a
convex cone that satisfies a complementarity condition \cite{cottle1992linear,
eaves1971linear, murty1988linear}. In this paper we apply Douglas-Rachford
splitting to a homogeneous embedding of the monotone LCP that encodes both the
feasibility and infeasibility conditions of the original problem. Although the
algorithm we develop is for general monotone LCPs, in this manuscript we focus
on convex quadratic cone programs (QCP) which are a special case. QCPs are a
type of convex optimization problem where the goal is to minimize a
quadratic objective subject to a conic constraint.

The recent SCS (Splitting Conic Solver) algorithm is a first-order
optimization procedure that can solve large convex \emph{linear} cone problems to
modest accuracy quickly \cite{ocpb:16, scs}. It is based on applying the
alternating directions method of multipliers (ADMM) to a homogeneous self-dual
embedding of the problem \cite{boyd2011distributed,parikh2014proximal,
zheng2019chordal}.  However, it cannot handle quadratic objectives directly,
relying instead on reductions to second-order cone constraints. This reduction
is inefficient in three ways.  First, it is costly to perform the necessary
matrix factorization required for conversion; second, the factorization may
destroy any favourable sparsity in the original data; and third, it appears
that operator splitting methods like ADMM are better able to exploit the strong
convexity of a quadratic objective when used directly, rather than as a
second-order cone \cite{giselsson2016linear, moursi2019douglas}.  This
limitation, and the myriad of real-world applications with quadratic objectives,
has inspired the development of first-order ADMM based solvers that tackle the
quadratic objective directly \cite{o2013splitting, stellato2018osqp,
garstka_2019}.  However, solvers not based on a homogeneous embedding must rely
on an alternative procedure based on diverging iterates to generate
certificates of infeasibility if the problem does not have a solution
\cite{banjac2019infeasibility, liu2017new, hermans2019qpalm,
banjac2021asymptotic, banjac2021, applegate2021infeasibility}. This procedure
tends to be slower and less robust in practice \cite{meszaros2015practical}. In
this paper we derive an algorithm that enjoys both properties - direct handling
of quadratic objectives and efficient generation of infeasibility certificates.

Building on the homogeneous self-dual model of Goldman and Tucker
\cite{goldmantucker} a series of papers developed homogeneous embeddings for the
LCP \cite{ye1994nl, ye1997homogeneous}, and the more general monotone
complementarity problem (MCP) \cite{andersen1999homogeneous}.  Here we use the
embedding of Andersen and Ye \cite{andersen1999homogeneous} applied to a
monotone LCP. We show that the operator corresponding to the embedding is
monotone, but not maximal, a property required for convergence of most operator
splitting techniques. We derive a natural maximal extension of the operator
which defines the final embedding. The resulting embedded problem can be
expressed as finding a zero of the sum of two maximal monotone operators, to
which we can apply standard operator splitting methods \cite{bauschke2011convex,
ryu2016primer}.

We focus our attention on Douglas-Rachford (DR) splitting due to its general
convergence guarantees and good empirical performance
\cite{douglas1956numerical, lions1979splitting}, though there are many
alternative approaches \cite{tseng2000modified, peaceman1955numerical}.  DR
splitting is equivalent to ADMM under a particular change of variables
\cite{gabay1983chapter, eckstein1992douglas} (and indeed both are instantiations
of the proximal point method \cite{rockafellar1976monotone}), and so the final
method we derive is closely related to the SCS algorithm.  Applying DR splitting
to the embedded problem results in an iterative procedure with a per-iteration
cost that is almost identical to the linear-convex case as tackled by SCS
and to applying the splitting method directly to the original
problem \cite{wen2010alternating, o2013splitting}.

There are several advantages that the homogeneous embedding approach has over
competing methods of generating certificates of infeasibility based on
diverging sequences \cite{banjac2019infeasibility, liu2017new}.  When using
the homogeneous embedding infeasibility certificates are generated by
\emph{convergence}. Alternative methods generate certificates by
\emph{divergence}, typically by examining the difference between successive
iterates.  This means when using the homogeneous embedding we have much more
flexibility about how we converge to a solution. For instance, we can apply any
procedure that guarantees convergence to a (nonzero) fixed point, which means we
can use inexact or stochastic updates \cite{rockafellar1976monotone,
eckstein1992douglas}, modern acceleration techniques \cite{goldstein2014fast,
zhang2018globally, themelis2019supermann, sopasakis2019superscs}, or
second-order extensions \cite{ali2017semismooth}. Moreover, approaches relying
on DR splitting automatically benefit from the guaranteed $o(1/k)$ bound on the
convergence rate \cite{he2015convergence, davis2016convergence}.  This is in
contrast to the difference of diverging iterates produced by DR splitting which
have no guaranteed \emph{rate} of convergence in general, satisfying a weaker notion of
convergence instead \cite[Thm.~3]{liu2017new}.  This stronger convergence
guarantee is not just theoretical, since algorithms for the homogeneous
embedding tends to be faster and more robust at detecting infeasibility in
practice. This was shown recently for interior point methods
\cite{meszaros2015practical} and we shall show similar results
experimentally for DR splitting. On the other hand, if the problem is feasible
then using the homogeneous embedding does not appear to harm convergence when
compared to tackling the original problem directly.  On the contrary, we present
numerical evidence to suggest that the homogeneous embedding approach can
actually converge to a solution slightly faster than direct approaches even when
the problem is feasible, at least when DR splitting is used.

QCPs are an important problem type with many applications, some of which we list
here.  Every linear program, quadratic program, second-order cone program,
semidefinite program and exponential cone program, \etc, can be formulated as a
QCP.  Sequential quadratic programming is an effective non-linear constrained
optimization algorithm that relies on solving a sequence of QCPs in order to
converge to the solution of the original non-linear problem
\cite[Ch.~18]{nocedal2006sequential}, \cite{tolle1995sequential}.  In machine
learning the support vector machine and the lasso can both be formulated as QCPs
\cite{noble2006support, tibshirani1996regression}. In portfolio optimization the
standard trade-off between return and risk is often formulated as a QCP once
additional constraints, such as trading costs, leverage limits, \etc, are
incorporated \cite{markowitz1991foundations, boyd2014performance}. Model
predictive control with quadratic stage costs is a QCP with a particular
sparsity structure \cite{camacho2013model, o2013splitting,
stathopoulos2016operator}.  Quadratic objectives over the semidefinite cone come
up when solving matrix reconstruction and low rank matrix completion problems,
where the goal is to find a positive semidefinite matrix with low rank that
minimizes the Frobenius norm to some reference \cite{jain2013low,
koltchinskii2011nuclear}.  Fast and robust generation of certificates of
infeasibility is important in a range of applications. For example, in a branch
and bound procedure applied to a mixed-integer quadratic programming problem
some branches are infeasible and pruning those away reliably is crucial for good
performance \cite{lawler1966branch, nair2020solving}.

\paragraph{Software}
The QCP algorithm we describe in this paper has been
implemented in C and is available online at this URL:
\\
\centerline{\url{https://github.com/cvxgrp/scs/tree/3.0.0}}
It is written as an extension of the SCS solver and it thus inherits
the capabilities of SCS. Specifically, it can solve
convex quadratic cone programs involving any combination of nonnegative,
second-order, semidefinite, exponential, and power cones (and their duals).
It has multi-threaded and single-threaded versions, can run on
both CPU and GPU, and solves the linear system at each iteration using either a
direct method or an iterative method.  It can be used in other C, C++, Python,
MATLAB, R, Julia, and Ruby programs and is a supported solver in parser-solvers
CVX \cite{cvx}, CVXPY \cite{cvxpy}, Convex.jl \cite{convexjl}, JuMP
\cite{DunningHuchetteLubin2017}, and YALMIP \cite{yalmip}.

\section{Monotone operator preliminaries}
\label{s-prelims}
This manuscript is concerned with operator splitting algorithms applied to a
monotone inclusion problem, so here we cover the basic concepts that we use
later; for more detail see, \eg, \cite{bauschke2011convex,
ryu2016primer}.
An operator (or relation, point-to-set mapping, multi-valued function)
$F$ on $\reals^d$ can be characterized by its graph, which
is a subset of $\reals^d \times \reals^d$. We
shall use the notation $F(x)$ to refer to the set $\{y\mid (x,y) \in F\}$.  Many
of the operators we consider in this paper are single-valued, \ie, for a fixed
$x \in \reals^d$ the set $\{ y \mid (x, y) \in F\}$ is a singleton and with some
abuse of notation we shall write $y = F(x)$ in this case.

An operator $F$ is \emph{monotone} if it satisfies
\[
(u - v)^\top(x-y) \geq 0, \mbox{ for all } (x,u), (y,v) \in F,
\]
or in shorthand notation
\[
(F(x) - F(z))^\top(x - z) \geq 0,
\]
for all $x, z \in \dom(F)$, where the domain is taken to be $\dom(F)
= \{x \mid F(x) \neq \emptyset\}$.

A monotone operator is \emph{maximal} if it is not strictly contained by another
monotone operator, \ie, extending $F$ to include $(x, u) \in \reals^d \times
\reals^d$ would result in a non-monotone operator for any $(x, u)$ not already
in $F$. Maximality is not just a technical detail, it is an important property
for convergence of the algorithms we develop in this manuscript and we shall
verify that the operators we present are maximal monotone.  Examples of maximal
monotone operators include the identity operator $I = \{(x,x) \mid x \in
\reals^d \}$ and the subdifferential $\partial f = \{g \mid f(z) \geq f(x) +
g^\top(z-x),~\forall z \in \reals^n\}$ of closed, convex, proper
function $f$ \cite{ryu2016primer}.

\subsection{Operator splitting}
In this manuscript we deal with
monotone inclusion problems involving the sum of two maximal monotone
operators; that is we want to find a $u \in \reals^d$ such that
\begin{equation}
\label{e-inc-prob}
  0 \in F(u) + G(u),
\end{equation}
where $F$ and $G$ are maximal monotone operators on $\reals^d$. Operator
splitting methods are a family of algorithms for finding a zero in this case
whereby we make use of the operators that define the problem separately. In this
manuscript we focus on the well-known Douglas-Rachford splitting
method. DR splitting applied to the inclusion problem \eqref{e-inc-prob} is the
following iterative procedure: From any initial $w^0 \in \reals^d$ repeat
for $k=0,1,\ldots$,
\begin{align}
  \begin{split}
    \label{e-dr}
\tilde u^{k+1} &= (I + F)^{-1} w^k\\
u^{k+1} &= (I + G)^{-1}(2\tilde u^{k+1} - w^k)\\
w^{k+1} &= w^k + u^{k+1} - \tilde u^{k+1}.
  \end{split}
\end{align}
If a solution to \eqref{e-inc-prob} exists, then the DR splitting procedure
generates a sequence of iterates $(w^k, u^k, \tilde u^k)$ that satisfy $\|u^k -
\tilde u^k\| \rightarrow 0$, $u^k \rightarrow u^\star$, and $w^k \rightarrow
w^\star \in u^\star + F(u^\star)$, where $u^\star \in \reals^d$ is a solution
\cite[Thm.~26.11]{bauschke2011convex}. The quantity $\|w^{k+1} -
w^k\|^2_2$ converges to zero at a rate of $o(1/k)$
\cite[Cor.~2]{davis2016convergence}, \cite[Thm.~3.1]{he2015convergence}. If a solution
does not exist then the iterates generated by DR splitting will not converge.

\subsection{Resolvent operator}
The first two steps of DR splitting require the evaluation of the
\emph{resolvent} of the two operators in the inclusion, which for operator $F$
is $(I + F)^{-1}$.  The resolvent
of a maximal monotone operator is always single-valued, even if the operator
that defines it is not, and has full domain \cite{minty1961maximal,
minty1962monotone}. If $F$ is the subdifferential of a convex function $f$,
then the resolvent is known as the \emph{proximal} operator
\cite{parikh2014proximal}, and is given by
\begin{equation}
\label{e-prox}
\begin{array}{lllll}
&&y &=& (I + \partial f)^{-1}x\\
&\Leftrightarrow& 0 &\in& \partial f(y) + y - x \\
&\Leftrightarrow& y &=& \argmin_z \left(f(z) + (1/2)\|z - x\|_2^2 \right).
\end{array}
\end{equation}

\section{The monotone and linear complementarity problems}
\label{s-equivalences}
Quadratic cone programs (QCPs) are the main problems of interest in this paper
and in this section we review the relationship between QCPs and linear
complementarity problems (LCP), which are themselves a special case of monotone
complementarity problems (MCP). We introduce these complementarity problems and
show their equivalence to monotone \emph{inclusion} problems, to which we can
apply operator splitting techniques. In the sequel we shall embed the conditions
for feasibility and infeasibility of an LCP into an MCP.

The \emph{monotone complementarity problem} MCP$(F, \Cc)$ defined by maximal
monotone operator $F$ on $\reals^d$ and nonempty, closed, convex
cone $\Cc$ is to find a point $z \in \reals^d$ for which
\begin{equation}
\label{e-original}
\exists\,w \in F(z) \ \mbox{ s.t. } \  \Cc \ni z \perp w \in \Cc^*,
\end{equation}
where $\Cc^*$ denotes the dual cone to $\Cc$, \ie, $\Cc^* = \{w \mid w^\top z
\geq 0, z \in \Cc\}$.
That is, the problem is to find a
$z \in \Cc$ such that for some $w \in F(z) \cap \Cc^*$ we have $z^\top w = 0$.
If $F$ is single-valued, then we can write the problem more
succinctly as finding a $z \in \reals^d$ such that
$\Cc \ni z \perp F(z) \in \Cc^*$.

Problem \eqref{e-original} is equivalent to finding a $z \in \Cc$ that
satisfies the following variational inequality \cite[Def.~26.19]{bauschke2011convex}
\begin{equation}
\label{e-vi}
\exists\,w \in F(z) \ \mbox{ s.t. } \  (y - z)^\top w \geq 0 \quad \forall y \in \Cc.
\end{equation}
To see this first note that if we have a $(z, w) \in F$ that satisfies \eqref{e-original}
then clearly
\[
y^\top w \geq z^\top w = 0,
\]
for all $y \in \Cc$ since $w \in \Cc^*$. To see the other
direction consider a $(z,w) \in F$ with $z \in \Cc$ that satisfies \eqref{e-vi} and note that if
$z^\top w \neq 0$, then we can take $y = (1/2) z$ or $y = (3/2) z$ to
violate the upper bound property, so it must be the case that $z^\top w = 0$,
then $y^\top w \geq 0$ for all $y \in \Cc$ implies that
$w \in \Cc^*$.

These problems are also equivalent to the problem of finding a $z \in \reals^d$
that satisfies the following inclusion:
\begin{equation}
\label{e-mip}
0 \in F(z) + N_\Cc(z),
\end{equation}
where $N_\Cc(z)$ is the normal cone operator for cone $\Cc$, and is given by
\[
N_\Cc(z) = \left\{
\begin{array}{ll}
\{x \mid (y - z)^\top  x \leq 0, \quad \forall y \in \Cc\} & z \in \Cc\\
\emptyset & z \not \in \Cc.\\
\end{array}
\right.
\]
It is readily shown that $N_\Cc = \partial I_\Cc$, \ie, the subdifferential
of the convex indicator function for $\Cc$. Therefore $N_\Cc$ is maximal
monotone with resolvent $(I + N_\Cc)^{-1}x = \Pi_\Cc(x)$,
the Euclidean projection onto $\Cc$, as can be seen using Equation \eqref{e-prox}.

To see equivalence of problem \eqref{e-vi} and \eqref{e-mip}, note that if $z$
satisfies \eqref{e-mip} then $z \in \Cc$ and there exists $w \in F(z)$ such that
$-w \in N_\Cc(z)$ and so $z$ satisfies \eqref{e-vi} and vice-versa. The sum of
two maximal monotone operators is also maximal monotone, so problem
\eqref{e-mip} is a maximal monotone inclusion problem.

An affine function $F(z) = Mz + q$ with
matrix $M \in \reals^{d \times d}$ and vector $q \in \reals^d$
is maximal monotone if and only if $M$ is monotone, \ie,
\begin{equation} \label{e-mat-monotone}
M + M^\top \succeq 0,
\end{equation}
where we use the notation $\cdot \succeq 0$ to denote membership in the
positive semidefinite cone of matrices. In this case
MCP$(F, \Cc)$ is a monotone \emph{linear complementarity problem} LCP$(M, q, \Cc)$;
\ie, the problem of finding $z \in \reals^d$ such that
\begin{equation}
\label{e-lcp}
\Cc \ni z \perp (Mz + q) \in \Cc^*.
\end{equation}
When $M$ is not monotone then the LCP
may be very difficult to solve \cite{cottle1992linear}. One immediate
consequence of the fact that $M$ is monotone is that
\begin{equation}
\label{e-fact1}
z^\top M z = 0 \ \Leftrightarrow \ (M + M^\top) z = 0,
\end{equation}
which can be seen from the fact that $z^\top M z = (1/2)z^\top(M + M^\top) z =
(1/2) \|(M + M^\top)^{1/2} z\|^2_2$ for any $z \in \reals^d$. We shall make
use of this fact in our analysis.

\subsection{Quadratic cone programming}
As a concrete example of an LCP take the convex
\emph{quadratic cone program} (QCP), which is the following
primal-dual problem pair:
\begin{equation}
\label{e-qcp}
\begin{array}{lr}
\begin{array}{ll}
\mbox{minimize} & (1/2)x^\top  P x + c^\top  x\\
\mbox{subject to} &  Ax + s = b\\
  & s \in \Kc
\end{array}
&
\begin{array}{ll}
\mbox{maximize} & -(1/2)x^\top  P x - b^\top  y\\
\mbox{subject to} &  Px + A^\top y + c = 0\\
  & y \in \Kc^*,
\end{array}
\end{array}
\end{equation}
over variables $x \in \reals^n$, $s \in \reals^m$, $y\in \reals^m$, with data $A
\in \reals^{m \times n}$, $P \in \reals^{n \times n}$, $c \in \reals^n$, $b
\in \reals^m$, where $\Kc$ is a nonempty, closed, convex cone and where $P = P^\top
\succeq 0$ (for a derivation of the dual see \cite[A.2]{banjac2019infeasibility}).
When strong duality holds, the Karush-Kuhn-Tucker (KKT) conditions
are necessary and sufficient for
optimality \cite[\S 5.5.3]{boyd2004convex}. They are given by
\begin{equation}
\label{e-qp-kkt}
  Ax + s = b, \quad Px + A^\top y + c= 0,\quad s \in \Kc, \quad y \in
  \Kc^*,\quad  s \perp y.
\end{equation}
These are primal feasibility, dual feasibility, primal and dual cone membership,
and complementary slackness.  The complementary slackness condition is
equivalent to a zero \emph{duality gap} condition at any optimal point, that is
for $(x,y,s)$ that satisfy the KKT conditions we have
\begin{equation}
\label{e-duality-gap}
s\perp y \ \Leftrightarrow \ c^\top x + b^\top y + x^\top P x = 0.
\end{equation}
The KKT conditions can be rewritten as
\begin{equation}
\label{e-kkt}
  \reals^n \times \Kc^* \ni \begin{bmatrix} x \\ y \end{bmatrix}\perp
    \begin{bmatrix}Px + A^\top  y +c\\b -Ax \end{bmatrix} \in \{0\}^n \times
\Kc,
\end{equation}
which corresponds to LCP$(M,q, \Cc)$ in variable $z \in \reals^d$ with
\begin{equation}
\label{e-qp-lcp}
  z =  \begin{bmatrix} x \\ y \end{bmatrix}, \quad M = \begin{bmatrix}P & A^\top \\ -A & 0
  \end{bmatrix}, \quad q =
    \begin{bmatrix} c \\ b \end{bmatrix}, \quad \Cc = \reals^n \times \Kc^*,
\end{equation}
where dimension $d = n+m$
and $M$ is monotone, \ie, satisfies \eqref{e-mat-monotone}, since $P
\succeq 0$.

If there exists a solution to the QCP, then there exists
a feasible point of the LCP, and vice-versa. If the quadratic cone
program is primal or dual infeasible, then the LCP is infeasible, and
vice-versa.
In this case any $y \in \reals^m$ that satisfies
\begin{equation}
\label{e-qp-infeas}
A^\top y = 0,\  y \in \Kc^*,\ b^\top y < 0
\end{equation}
acts a certificate that the quadratic cone program is primal infeasible (dual
unbounded)
\cite[\S 5.8]{boyd2004convex}.  Similarly, if we can find $x \in \reals^n$ such that
\begin{equation}
\label{e-qp-unbdd}
Px = 0,\ -Ax\in \Kc,\ c^\top x < 0
\end{equation}
then this is a certificate that the problem is dual infeasible (primal
unbounded)
\cite[\S 5.8]{boyd2004convex}.  We shall discuss how these certificates relate to
infeasibility of LCPs in the sequel.

\section{A homogeneous embedding for monotone LCPs}
As we have seen, every monotone LCP can be written as the monotone inclusion
problem in Equation \eqref{e-mip}.  However, if the original LCP is infeasible (when there
does not exist a $z \in \reals^d$ that satisfies the conditions \eqref{e-lcp}) then the
monotone inclusion problem does not have a solution. In this section we derive a
homogeneous embedding that always has a solution, even when the original LCP
is infeasible. To do so we derive two homogeneous MCPs, one that encodes
feasibility and another that encodes (strong) infeasibility.  The final
embedding is then an MCP involving the \emph{union} of these two operators,
which we shall show is maximal monotone.
\subsection{LCP feasibility}
Andersen and Ye developed a homogeneous embedding that encodes the feasibility
conditions for monotone complementarity problems \cite{andersen1999homogeneous}.
When specialized to the $d$-dimensional LCP$(M, q, \Cc)$ case
the (single-valued) embedding operator $\Fc: \reals^d \times
\reals_{++} \rightarrow \reals^{d+1}$ is given by
\begin{equation}
\label{e-Fc}
\Fc(z, \tau) =
\begin{bmatrix}
Mz + q\tau\\
-z^\top Mz/\tau  -z^\top q
\end{bmatrix}
\end{equation}
and the embedded MCP$(\Fc, \Cc_+)$ is to find a $u \in \reals^{d+1}$ such that
\begin{equation}
\label{e-feas-embed}
  \Cc_+ \ni u \perp
    \Fc(u) \in \Cc_+^*,
\end{equation}
where $\Cc_+ = \Cc \times \reals_+$,
with dual cone $\Cc_+^* = \Cc^* \times \reals_+$. Note that
complementarity always holds, since $u^\top \Fc(u) = 0$ for any $u \in
\dom(\Fc)$.  Next we show that MCP$(\Fc, \Cc_+)$ encodes the set of
solutions to LCP$(M, q, \Cc)$.
If there exists a point $z^\star \in \reals^d$ that solves LCP$(M,q,\Cc)$, \ie,
satisfies \eqref{e-lcp}, then for any
$t > 0$
\begin{equation}
  \Cc_+ \ni \begin{bmatrix} tz^\star \\ t \end{bmatrix} \perp
    \begin{bmatrix} t(Mz^\star + q)  \\ 0 \end{bmatrix}  \in \Cc_+^*
\end{equation}
and so $u = (tz^\star, t) \in \reals^d \times \reals_{++}$ is a solution to the homogeneous embedding
\eqref{e-feas-embed}. Now we show the other direction. Let $u = (z,\tau) \in
\dom(\Fc)$, \ie, $\tau > 0$, be a solution to \eqref{e-feas-embed}. We know
that $z^\top (M z + q\tau) = 0$, and so $(z / \tau) \perp (M(z / \tau) + q)$ and
due to the positive homogeneity of cones $z / \tau \in \Cc$ and $(M(z / \tau) +
q) \in \Cc^*$. These imply that the point $z / \tau$ satisfies the conditions of
\eqref{e-lcp}, and so is a solution to LCP$(M,q,\Cc)$.
\begin{lemma}
The operator $\Fc$ is monotone.
\end{lemma}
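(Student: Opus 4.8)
The plan is to verify the defining monotonicity inequality directly from the formula \eqref{e-Fc}. Fix two points $u_1 = (z_1,\tau_1)$ and $u_2 = (z_2,\tau_2)$ in $\dom(\Fc) = \reals^d \times \reals_{++}$, and write $\Delta z = z_1 - z_2$ and $\Delta\tau = \tau_1 - \tau_2$. I would split the inner product $(\Fc(u_1) - \Fc(u_2))^\top(u_1 - u_2)$ into the contribution from the first (affine) block $Mz + q\tau$ paired with $\Delta z$, and the contribution from the last (nonlinear) coordinate $-z^\top M z/\tau - q^\top z$ paired with $\Delta\tau$.

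The first useful observation is that the two terms linear in $q$ cancel: the first block contributes $\Delta\tau\, q^\top \Delta z$, while the $q^\top z$ part of the last coordinate contributes $-q^\top \Delta z\,\Delta\tau$, leaving no dependence on $q$. Using the fact $z^\top M z = z^\top S z$ with $S = \tfrac12(M + M^\top) \succeq 0$ (which also gives $\Delta z^\top M \Delta z = \Delta z^\top S \Delta z \ge 0$ by monotonicity of $M$), the inner product reduces to
\begin{equation*}
\Delta z^\top S \Delta z + \left(\frac{z_2^\top S z_2}{\tau_2} - \frac{z_1^\top S z_1}{\tau_1}\right)\Delta\tau.
\end{equation*}
Expanding $\Delta z^\top S\Delta z = z_1^\top S z_1 - 2 z_1^\top S z_2 + z_2^\top S z_2$ and collecting terms, the two ``diagonal'' quadratics $z_i^\top S z_i$ cancel, and the expression simplifies to
\begin{equation*}
\frac{\tau_2}{\tau_1}\, z_1^\top S z_1 + \frac{\tau_1}{\tau_2}\, z_2^\top S z_2 - 2\, z_1^\top S z_2.
\end{equation*}

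The final step is to recognize this as a perfect square. Factoring $S = R^\top R$ and setting $p = R z_1$, $r = R z_2$, the quantity becomes $(\tau_2/\tau_1)\|p\|_2^2 + (\tau_1/\tau_2)\|r\|_2^2 - 2 p^\top r$; since $\sqrt{\tau_2/\tau_1}\cdot\sqrt{\tau_1/\tau_2} = 1$, this equals $\bigl\|\sqrt{\tau_2/\tau_1}\,p - \sqrt{\tau_1/\tau_2}\,r\bigr\|_2^2 \ge 0$, which is where positivity of $\tau_1,\tau_2$ is used. I expect the main obstacle to be precisely this last maneuver: the nonlinear last coordinate of $\Fc$ is what makes monotonicity nonobvious, and the crux is to carry out the cancellations cleanly and then spot the completion of squares (equivalently, the arithmetic-geometric-mean/Cauchy--Schwarz structure) that shows the cross term $-2 z_1^\top S z_2$ is dominated.
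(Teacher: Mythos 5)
Your proof is correct: the cancellation of the $q$-terms, the reduction to $\frac{\tau_2}{\tau_1}z_1^\top S z_1 + \frac{\tau_1}{\tau_2}z_2^\top S z_2 - 2 z_1^\top S z_2$, and the final completion of squares all check out, and you correctly restrict the symmetrization $M \to S$ to diagonal quadratic forms before expanding. The paper takes a shorter route that exploits structure you did not use: since $u^\top \Fc(u) = 0$ for every $u \in \dom(\Fc)$ (complementarity of the homogeneous embedding), the inner product collapses immediately to the cross terms $-\Fc(u_1)^\top u_2 - \Fc(u_2)^\top u_1$, which the paper then factors as
\[
\tau_1\tau_2\,\bigl(M(z_1/\tau_1) - M(z_2/\tau_2)\bigr)^\top\bigl(z_1/\tau_1 - z_2/\tau_2\bigr) \;\geq\; 0,
\]
i.e., monotonicity of $M$ evaluated at the perspective-rescaled points $z_i/\tau_i$. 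In fact your perfect square is this very quantity in disguise: with $S = R^\top R$ one has $\bigl\|\sqrt{\tau_2/\tau_1}\,Rz_1 - \sqrt{\tau_1/\tau_2}\,Rz_2\bigr\|_2^2 = \tau_1\tau_2\,\bigl\|R(z_1/\tau_1 - z_2/\tau_2)\bigr\|_2^2$, so both arguments terminate at the same nonnegative expression. What the paper's route buys is brevity and a structural explanation: monotonicity of $\Fc$ is inherited from that of $M$ under the perspective rescaling, and the same complementarity trick is reused in the paper's proofs for $\Ic$ and for maximality of $\Qc$, so it is the device worth internalizing. What your route buys is that it is entirely mechanical --- direct expansion, symmetrization, complete the square --- requiring no prior observation about $\Fc$, at the cost of having to spot the square at the end.
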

\begin{proof}
Let $u = (u_z,u_\tau) \in \reals^d \times \reals_{++}$, $w = (w_z, w_\tau) \in
\reals^d \times \reals_{++}$, then, 
\begin{align*}
  (\Fc(u) - \Fc (w))^\top (u - w) &= -(\Fc (u))^\top  w - (\Fc(w))^\top  u \\
  &=-w_z^\top  Mu_z + w_\tau u_z^\top Mu_z / u_\tau
  - u_z^\top
  M w_z + u_\tau w_z^\top M w_z / w_\tau\\
    &= u_\tau w_\tau (Mu_z/u_\tau - M w_z/w_\tau)^\top (u_z/u_\tau-w_z/w_\tau)\\
    &\geq 0,
\end{align*}
since $M$ is monotone and $u_\tau w_\tau > 0$.
\end{proof}
Although $\Fc$ is monotone, it is not \emph{maximal} monotone, which is a
required property for DR splitting to have guaranteed convergence. In order to
extend the operator to be maximal we must consider infeasibility of the original
LCP, which we do next.

\subsection{LCP infeasibility}
Let us denote by $\Ac = \{(z, w) \mid w = -(Mz + q) \}$.
LCP$(M,q,\Cc)$ is feasible if and only if there exists a point $(z,w) \in
N_\Cc \cap \Ac$.  To see this observe that any such point satisfies $-(Mz +
q) = w \in N_\Cc(z)$, so $0 \in (Mz + q) + N_\Cc(z)$, \ie, $z$ satisfies
\eqref{e-mip}.  If $N_\Cc \cap \Ac = \emptyset$ then no such point exists and
the problem is \emph{infeasible}. A stronger condition is that the distance
between the sets $N_\Cc$ and $\Ac$ is strictly
positive, that is
\[
\dist(N_\Cc, \Ac) = \inf_{(z_1, w_1) \in N_\Cc, (z_2, w_2)
\in \Ac}\|(z_1, w_1) - (z_2, w_2)\| > 0,
\]
in which case we refer to the problem
as \emph{strongly infeasible} \cite{luo1997duality, lourencco2016weak,
lourencco2021solving}.  A necessary and sufficient condition for this is that
the sets are \emph{strongly separated} \cite[Ch.~11]{rockafellar1970convex},
which is the existence of a strongly separating hyperplane with
normal vector $(\mu, \lambda)\in\reals^d \times \reals^d$ that satisfies
\[
\inf_{(z,w) \in \Ac } (z^\top \mu + w^\top \lambda) > 0, \quad \sup_{(z,w) \in N_\Cc}
(z^\top \mu + w^\top \lambda) \leq 0,
\]
since $N_\Cc$ is a cone \cite[Thm.~11.7]{rockafellar1970convex}.
We can simplify this by substituting $w = -(Mz + q)$ into the first
condition, yielding
\[
\inf_{z \in \reals^d} (z^\top(\mu - M^\top \lambda) - \lambda^\top q) > 0,
\]
which implies that $\mu = M^\top \lambda$, and consequently
that $\lambda^\top q < 0$. This brings us to necessary and sufficient conditions
for strong infeasibility of LCP$(M,q,\Cc)$, which is the existence of
a $\lambda \in \reals^d$ such that
\begin{equation}
\label{e-strong-infeas}
\lambda^\top q < 0, \quad \sup_{(z,w) \in N_\Cc}
\lambda^\top (M z + w) \leq 0.
\end{equation}
Next we establish that the
above conditions on $\lambda$ can be embedded into another LCP.

\begin{lemma}
\label{l-infeas}
LCP$(M,q, \Cc)$ is strongly infeasible if and only if there exists a $\lambda \in
\reals^d$ with $\lambda^\top q < 0$ that solves LCP$(M, 0, \Cc)$, \ie,
\begin{equation}
\label{e-lcp-infeas}
  \Cc \ni \lambda \perp
   M\lambda \in \Cc^*.
\end{equation}
\end{lemma}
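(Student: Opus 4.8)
The plan is to build on the characterization of strong infeasibility already derived in \eqref{e-strong-infeas}: LCP$(M,q,\Cc)$ is strongly infeasible if and only if there is a $\lambda \in \reals^d$ with $\lambda^\top q < 0$ satisfying $\sup_{(z,w) \in N_\Cc} \lambda^\top(Mz + w) \leq 0$. Since the condition $\lambda^\top q < 0$ appears verbatim in the lemma, the entire content reduces to proving, for a fixed $\lambda$, the equivalence between this supremum condition and the assertion that $\lambda$ solves LCP$(M,0,\Cc)$, i.e.\ $\Cc \ni \lambda \perp M\lambda \in \Cc^*$. The first thing I would do is rewrite the normal cone in the convenient form $N_\Cc(z) = \{w \mid w \in -\Cc^*,\ w^\top z = 0\}$ for $z \in \Cc$ (and empty otherwise), which follows from its definition by testing $y = 0$ and $y = 2z$ in $\Cc$. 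Because $N_\Cc$ is a cone and the map $(z,w) \mapsto \lambda^\top(Mz + w)$ is linear, the supremum is either $0$ or $+\infty$, so the inequality is exactly the requirement that $\lambda^\top(Mz + w) \leq 0$ for every $(z,w) \in N_\Cc$.

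For the direction that $\lambda$ solving LCP$(M,0,\Cc)$ implies the supremum condition, I would fix an arbitrary $(z,w) \in N_\Cc$ and bound the two terms separately. Since $\lambda \in \Cc$ and $-w \in \Cc^*$, we get $\lambda^\top w \leq 0$ immediately. For the term $\lambda^\top Mz = (M^\top\lambda)^\top z$ I would invoke the key consequence of monotonicity: because $\lambda^\top M\lambda = 0$, fact \eqref{e-fact1} gives $(M + M^\top)\lambda = 0$, hence $M^\top\lambda = -M\lambda$. Then $\lambda^\top Mz = -(M\lambda)^\top z \leq 0$ because $M\lambda \in \Cc^*$ and $z \in \Cc$. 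Summing the two nonpositive terms yields $\lambda^\top(Mz+w) \leq 0$, as required.

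For the converse, I would extract the three defining conditions of LCP$(M,0,\Cc)$ by evaluating the supremum condition at carefully chosen points of $N_\Cc$. Taking $z = 0$ and letting $w$ range over $-\Cc^*$ forces $\lambda^\top w \leq 0$ for all $w \in -\Cc^*$, i.e.\ $\lambda \in \Cc^{**} = \Cc$. Since $\lambda \in \Cc$, the pair $(\lambda, 0)$ lies in $N_\Cc$, and evaluating there gives $\lambda^\top M\lambda \leq 0$, which combined with monotonicity ($\lambda^\top M\lambda \geq 0$) yields the complementarity $\lambda^\top M\lambda = 0$. Finally, taking $w = 0$ and letting $z$ range over $\Cc$ gives $(M^\top\lambda)^\top z \leq 0$ for all $z \in \Cc$, i.e.\ $M^\top\lambda \in -\Cc^*$; applying $(M+M^\top)\lambda = 0$ once more (now licensed by the complementarity just established) turns this into $M\lambda \in \Cc^*$. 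Together these are precisely $\Cc \ni \lambda \perp M\lambda \in \Cc^*$.

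The main obstacle is bridging the natural output of the separation argument, which is phrased in terms of $M^\top\lambda$, with the desired statement about $M\lambda$; these differ because $M$ need not be symmetric. The device that resolves this is the monotonicity identity \eqref{e-fact1}, which converts the complementarity $\lambda^\top M\lambda = 0$ into the symmetry-breaking relation $(M+M^\top)\lambda = 0$, hence $M^\top\lambda = -M\lambda$. I would also be careful about the logical ordering in the converse: one must establish $\lambda \in \Cc$ first (so that $(\lambda,0)$ is a legal test point in $N_\Cc$), then derive complementarity, and only then use \eqref{e-fact1} to convert the $M^\top\lambda$ membership into the required $M\lambda$ membership.
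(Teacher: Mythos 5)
Your proposal is correct and follows essentially the same route as the paper's proof: both directions reduce the lemma to the separating-hyperplane characterization \eqref{e-strong-infeas}, extract $\lambda \in \Cc$ and $M^\top\lambda \in -\Cc^*$ by testing suitable points of $N_\Cc$, and use monotonicity via \eqref{e-fact1} to convert between $M^\top\lambda$ and $M\lambda$. The only cosmetic differences are that you state the characterization $N_\Cc(z) = \{w \in -\Cc^* \mid w^\top z = 0\}$ up front (the paper re-derives the needed facts inline, e.g.\ its scaling argument showing $-w \in \Cc^*$) and that you obtain complementarity from the test point $(\lambda,0)$ rather than by pairing the two cone memberships.
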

\begin{proof}
First, we show that any certificate of strong infeasibility solves
\eqref{e-lcp-infeas}.
Consider the second condition in \eqref{e-strong-infeas},
setting $z = 0$ yields $w^\top \lambda \leq 0$ for all $w \in N_\Cc(0) =
-\Cc^*$, and so $\lambda \in \Cc$.  For any $z \in \Cc$ we know that $0 \in
N_\Cc(z)$ and so $\lambda^\top M z \leq 0$, which
implies that $-M^\top\lambda \in \Cc^*$. Together these tell us that
$\lambda^\top M \lambda \leq 0$, but since $M$ is monotone it must be that
$\lambda^\top  M \lambda = 0$ and therefore $M\lambda =- M^\top \lambda$, using
\eqref{e-fact1}. Putting it together with the fact that $\lambda^\top q < 0$
yields the final result.

Now we show the other direction, assume $\lambda \in \reals^d$ satisfies
\eqref{e-lcp-infeas} with $\lambda^\top q < 0$.  We must show that this
satisfies the second condition in \eqref{e-strong-infeas}.  Take any $(z, w) \in
N_\Cc$ and $x \in \Cc$, then from the definition of normal cones $x^\top w \leq
z^\top w$. If $x^\top w > 0$ then there must exist some $t > 0$ such that $t
x^\top w > z^\top w$, and since $tx \in \Cc$ this would contradict that fact
that $w\in N_\Cc(z)$. So it must be the case that $x^\top w \leq 0$. Since $x$
was arbitrary in $\Cc$ it implies that $-w \in \Cc^*$, and so $\lambda^\top w
\leq 0$ due to $\lambda \in \Cc$. Since $\lambda^\top M \lambda = 0$ we know
that $M\lambda = -M^\top \lambda \in \Cc^*$ from \eqref{e-fact1}, so $z^\top
(M^\top \lambda) \leq 0$. Summing these two yields $\lambda^\top(Mz + w) \leq
0$ for any $(z,w) \in N_\Cc$.
\end{proof}

We call any $\lambda$ that satisfies \eqref{e-strong-infeas} a proof or
certificate of (strong) infeasibility. The existence of such a $\lambda$
precludes the existence of $(z, w) \in N_\Cc \cap \Ac$, and any $(z, w)
\in N_\Cc \cap \Ac$ acts as a certificate that there is no $\lambda$ satisfying
\eqref{e-strong-infeas}. In other words at most one of \eqref{e-strong-infeas}
and \eqref{e-lcp} has a solution and they are therefore \emph{weak
alternatives}. This can also been proven directly from the LCPs:
Assume that we have found both a $z \in
\reals^d$ that solves LCP$(M,q,\Cc)$ and a $\lambda \in \reals^d$ that solves
LCP$(M,0,\Cc)$ with $\lambda^\top q < 0$. Then $z + \lambda
\in \Cc$ and $M(z + \lambda) + q \in \Cc^*$ and from cone duality
$0 \leq (z+\lambda)^\top (M(z + \lambda) + q) = \lambda^\top q < 0$, which is a
contradiction.

In the special case of a QCP satisfying strong duality then \emph{exactly} one
of those two systems has a solution and they are \emph{strong alternatives}
\cite[\S 5.8]{boyd2004convex}.

\subsubsection{QCP infeasibility}
Here we show that the conditions in Equation \eqref{e-lcp-infeas} are exactly
equivalent to the conditions of (strong) primal infeasibility
\eqref{e-qp-infeas} or (strong) dual infeasibility \eqref{e-qp-unbdd} in the
case where we are solving a QCP, and that any certificate for one can be
converted into a certificate for the other.

First, consider the case where $y \in \reals^m$ is a certificate of
primal infeasibility for the QCP, then $\lambda = (0 , y) \in
\reals^n\times\reals^m$ is a certificate for the LCP
since it is readily verified to satisfy the conditions in \eqref{e-lcp-infeas}
with $\lambda^\top q = b^\top y < 0$.
Similarly, if $x \in \reals^n$ is a certificate of dual infeasibility for the QCP,
then $\lambda = (x, 0) \in \reals^n\times\reals^m$ is a certificate of
infeasibility for the LCP by the same logic.

Now consider $\lambda = (x,y) \in \reals^n\times\reals^m$ a certificate of
infeasibility for LCP$(M,q,\Cc)$ corresponding to a QCP, in
which case using Equation \eqref{e-lcp-infeas} we have
\begin{equation}
  \reals^n \times \Kc^* \ni \begin{bmatrix} x \\ y \end{bmatrix} \perp
  \begin{bmatrix} P x + A^\top y \\ -Ax\end{bmatrix}  \in \{0\}^n \times \Kc.
\end{equation}
First note that $y \in \Kc^*$ and $-Ax \in \Kc$.  The second orthogonality
condition implies that $y^\top A x = 0$. From this and the first orthogonality
condition we can infer that $x^\top P x = 0$ and so $Px = 0$, and therefore
$A^\top y = 0$ due to the $\{0\}^n$ cone membership.  Finally, $q^\top \lambda =
c^\top x + b^\top y < 0$ by assumption, and so at least one of $c^\top x$ or
$b^\top y$ is negative. If $c^\top x <0$, then $x$ is a certificate for
dual infeasibility for the QCP since it satisfies \eqref{e-qp-unbdd}, on the other
hand if $b^\top y < 0$ then $y$ is a certificate of primal infeasibility since it
satisfies \eqref{e-qp-infeas}. If both $c^\top x$ and $b^\top y$ are negative
then the original problem is both primal and dual infeasible.

\subsection{Infeasibility embedding}
Here we introduce a homogeneous operator that encodes the
infeasibility conditions for LCP$(M,q,\Cc)$. It will become clear
why we need this operator in the next section when we use it to derive the
complete embedding.
Based on lemma \ref{l-infeas} we define the operator $\Ic$ on $\reals^{d+1}$ as
\begin{equation}
\label{e-Ic}
\Ic(z,\tau) =
\left\{\begin{bmatrix} Mz \\ \kappa \end{bmatrix} \Biggm\vert \kappa \leq
-z^\top q\right\}, \quad \dom(\Ic) = \{(z, 0) \mid z^\top Mz = 0\}
\end{equation}
where $(z,\tau) \in \reals^d \times \reals$.
Consider MCP$(\Ic, \Cc_+)$, that is the problem of finding $u \in \reals^{d+1}$ for which
\begin{equation}
\label{e-infeas}
\exists\,v \in \Ic(u) \ \mbox{ s.t. } \  \Cc_+ \ni u \perp v \in \Cc_+^*.
\end{equation}
Note that again complementarity is always satisfied, \ie, $u^\top v = 0$
for all $(u, v) \in \Ic$.
If $\lambda$ is a certificate of infeasibility for LCP$(M, q, \Cc)$
then $(\lambda, 0) \in \dom(\Ic)$ and
\[
\begin{bmatrix}  M \lambda \\- \lambda^\top q\end{bmatrix} \in \Ic(\lambda,0),
\]
and therefore $(\lambda, 0)$ is a solution to MCP$(\Ic, \Cc_+)$. On the other
hand, any solution $u$ to MCP$(\Ic, \Cc_+)$ such that $(w, \kappa) = v \in \Ic(u)$
with $\kappa > 0$ yields a certificate of infeasibility for
LCP$(M, q, \Cc)$.
\begin{lemma}
The operator $\Ic$ is monotone.
\end{lemma}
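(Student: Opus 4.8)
The plan is to verify monotonicity straight from the definition in Section~\ref{s-prelims}, by taking two arbitrary elements of the graph of $\Ic$ and checking that the defining inner product is nonnegative. The key structural observation is that $\dom(\Ic)$ consists only of points whose final coordinate $\tau$ equals zero. Consequently the genuinely set-valued, inequality-constrained part of the output, namely the $\kappa$-component, is paired against a zero difference in the monotonicity computation and drops out entirely. This reduces the claim to the monotonicity of the matrix $M$, which we already have in hand.

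Concretely, first I would fix two graph elements, $\big((z,0),(Mz,\kappa)\big)$ and $\big((z',0),(Mz',\kappa')\big)$, where $z,z' \in \reals^d$ satisfy $z^\top M z = 0$ and $z'^\top M z' = 0$ (so both inputs lie in $\dom(\Ic)$), and where $\kappa \le -z^\top q$ and $\kappa' \le -z'^\top q$ as required by \eqref{e-Ic}. Next I would form the difference of the inputs, which is $(z-z',0)$, and the difference of the outputs, which is $(M(z-z'),\,\kappa-\kappa')$, and compute their inner product. Because the $\tau$-components of both inputs vanish, the term $(\kappa-\kappa')\cdot 0$ contributes nothing, leaving exactly $(M(z-z'))^\top(z-z')$, independent of the choice of $\kappa$ and $\kappa'$.

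Finally I would invoke the monotonicity of $M$: since $M + M^\top \succeq 0$ by \eqref{e-mat-monotone}, we have $(M(z-z'))^\top(z-z') = (1/2)(z-z')^\top(M+M^\top)(z-z') \ge 0$, which is precisely the monotonicity inequality for $\Ic$. I do not expect a genuine analytic obstacle here; the only point requiring care is the bookkeeping around the set-valuedness, where one must confirm that the inequality holds for \emph{all} admissible pairs $\kappa,\kappa'$, and the substance of the argument is exactly that the result is independent of them. Note that the domain constraint $z^\top M z = 0$ is not needed to establish monotonicity itself (it is there for the infeasibility interpretation), so I would not need \eqref{e-fact1} at this stage.
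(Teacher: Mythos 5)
Your proof is correct, but it takes a genuinely different (and more elementary) route than the paper's. The paper first uses the complementarity property of the graph of $\Ic$ --- namely $u^\top v = 0$ for every $(u,v)\in\Ic$, which itself rests on the domain condition $z^\top M z = 0$ --- to rewrite the monotonicity product as $-\Ic(u)^\top w - \Ic(w)^\top u$, and then invokes \eqref{e-fact1} (so, again, the domain condition) to conclude that the cross terms $-w_z^\top(M+M^\top)u_z$ vanish, giving the product as \emph{exactly} zero. You instead compute the product head-on: the $\kappa$-components are paired against the zero $\tau$-difference and drop out, leaving $(M(z-z'))^\top(z-z') = \tfrac12 (z-z')^\top(M+M^\top)(z-z') \ge 0$ by \eqref{e-mat-monotone}. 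Your observation that the constraint $z^\top M z = 0$ and the identity \eqref{e-fact1} are not needed is accurate: monotonicity holds for the larger operator obtained by dropping that constraint from $\dom(\Ic)$, so your argument proves slightly more in that sense. What the paper's route buys in exchange is the sharper structural fact that the monotonicity product is identically zero on $\dom(\Ic)\times\dom(\Ic)$ (the operator is ``flat'' there, consistent with its role as a homogeneous infeasibility certificate); this extra strength is not needed for the lemma itself, though the same ingredients ($(M+M^\top)w_z = 0$ on the domain) reappear in the maximality proof of $\Qc$ in Lemma~\ref{l-q-maximal}.
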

\begin{proof}
Let $u = (u_z,0) \in \reals^d \times \reals$ and $w = (w_z,0) \in \reals^d
\times \reals$ such that $u, w \in \dom(\Ic)$, then,
\begin{align*}
(\Ic(u) - \Ic(w))^\top (u - w) &= -\Ic(u)^\top w - \Ic(w)^\top u\\
&=-u_z^\top M w_z - w_z^\top M u_z\\
&=-w_z^\top(M + M^\top)u_z\\
&=0,
\end{align*}
since $(M + M^\top)u_z = (M + M^\top)w_z = 0$ using Equation \eqref{e-fact1}.
\end{proof}

\subsection{Final embedding}
We have two homogeneous monotone operators, $\Fc$ and $\Ic$, with associated
problems MCP$(\Fc, \Cc_+)$ and MCP$(\Ic, \Cc_+)$ that encode feasibility and
infeasibility of the original problem LCP$(M, q, \Cc)$ respectively.  However, neither
of these operators are maximal. Here we show that the \emph{union} of the two
operators is maximal monotone, and the associated MCP encodes both feasibility
and infeasibility of the original LCP. Let
\[
\Qc = \Fc \cup \Ic,
\]
with $\dom(\Qc) = \dom(\Fc) \cup \dom(\Ic)$. The operator $\Qc$
satisfies complementarity, \ie, $u^\top v = 0$ for all $(u, v) \in \Qc$, and is
positively homogeneous, \ie, $\Qc(tu) = t \Qc(u)$ for any $t > 0$. We shall show
that $\Qc$ is \emph{maximal} monotone in the sequel.
The embedded problem is to solve
MCP$(\Qc, \Cc_+)$, \ie, find a $u \in \reals^{d+1}$ for which
\begin{equation}
\label{e-homog-embed}
\exists\,v \in \Qc(u) \ \mbox{ s.t. } \  \Cc_+ \ni u \perp v \in \Cc_+^*,
\end{equation}
which from \S \ref{s-equivalences} we know is equivalent to
the monotone inclusion
\begin{equation}
\label{e-homog-monoinc}
0 \in \Qc(u) + N_{\Cc_+}(u).
\end{equation}
Since both $\Qc$ and $N_{\Cc_+}$
are maximal monotone we can apply operator splitting
methods to solve this problem, which we do in the next section. First, we discuss
how the solutions to MCP$(\Qc, \Cc_+)$ encode the solutions or certificates
of infeasibility to LCP$(M, q, \Cc)$.
Let $u^\star = (z^\star, \tau^\star) \in \reals^d \times \reals$ be any point
that satisfies Equation \eqref{e-homog-embed},
and let $(w^\star, \kappa^\star) = v^\star \in \Qc(u^\star)$. From complementarity
we know that
\[
(u^\star)^\top v^\star = (z^\star)^\top w^\star+ \tau^\star \kappa^\star = 0.
\]
However, $(z^\star)^\top w^\star \geq 0$ and $\tau^\star \kappa^\star \geq 0$ since
$\Cc_+$ and $\Cc_+^*$ are dual, and so it must be that $z^\star \perp w^\star$
and at most one of $\tau^\star$ and $\kappa^\star$ can be positive. When
$\tau^\star
> 0$ then $\kappa^\star =0$, $u^\star \in \dom(\Fc)$, $v^\star = \Fc(u^\star)$, the problem is
feasible and a solution to
LCP$(M, q, \Cc)$ can be derived from $u^\star$.  When $\kappa^\star > 0$ then
$\tau^\star = 0$, $u^\star \in \dom(\Ic)$, $v^\star \in \Ic(u^\star)$, the
problem is infeasible and a certificate of infeasibility of LCP$(M, q, \Cc)$ can
be obtained from $u^\star$.  The next case to consider is when $\tau^\star =
\kappa^\star =
0$, with $u \neq 0$. This is pathological and rarely arises
in practice \cite{ye1997homogeneous}. We can rule out some situations for this
case; for example, if the set of solutions to the LCP is non-empty and bounded
then this pathology cannot occur.
On the other hand, if the LCP is weakly infeasible then the only solutions to
the homogeneous embedding have this form. This includes, for example, feasible
QCPs that do not satisfy strong duality.  However, in
that case it may be possible to modify the problem using facial reduction
techniques \cite{permenter2017solving} or to understand the pathology by
examining how the iterates behave \cite{liu2017new}.

These cases are summarized in Table~\ref{t-cases}. The only other possibility
we must consider is the trivial solution $u = 0$, which is always a solution to
MCP$(\Qc, \Cc_+)$, no matter the problem data. However, we shall prove later that
DR splitting will not converge to
zero if properly initialized, so we can safely ignore this possibility.
\begin{table}
\begin{center}
\begin{tabular}{c|cc}
 & $\tau^\star > 0$ & $\tau^\star = 0$\\
 \hline
$\kappa^\star > 0$ & N/A& Infeasible \\
$\kappa^\star = 0$ & Solved & Pathological.
\end{tabular}
\caption{How the solutions of the MCP relate to the status of the LCP.}
\label{t-cases}
\end{center}
\end{table}

\subsection{Maximal monotonicity of $\Qc$}
In order to apply DR splitting to problem \eqref{e-homog-monoinc} we need $\Qc$
to be \emph{maximal monotone}, without which convergence is not guaranteed.
\begin{lemma}
\label{l-q-maximal}
The operator $\Qc = \Fc \cup \Ic$ is maximal monotone.
\end{lemma}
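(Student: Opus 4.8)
The plan is to first verify that $\Qc$ is monotone and then that it admits no proper monotone extension. Monotonicity within $\Fc$ and within $\Ic$ has already been established, so only the cross terms remain: for $u=(u_z,u_\tau)\in\dom(\Fc)$ (so $u_\tau>0$) and $w=(w_z,0)\in\dom(\Ic)$ with $(Mw_z,\kappa)\in\Ic(w)$, a direct expansion of $(\Fc(u)-(Mw_z,\kappa))^\top(u-w)$ using complementarity and the identity \eqref{e-fact1} (which forces $(M+M^\top)w_z=0$, so the mixed terms $u_z^\top M w_z$ drop out) collapses the expression to $-u_\tau(\kappa+w_z^\top q)$; this is nonnegative because $\kappa\le-w_z^\top q$ and $u_\tau>0$. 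This establishes monotonicity of $\Qc$.

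For maximality I would use the characterization that $\Qc$ is maximal if and only if every pair $(\bar u,\bar v)$, with $\bar u=(\bar z,\bar\tau)$ and $\bar v=(\bar w,\bar\kappa)$, satisfying $(\bar v - v)^\top(\bar u - u)\ge 0$ for all $(u,v)\in\Qc$ already belongs to $\Qc$. I would split on the sign of $\bar\tau$. When $\bar\tau>0$ the point $\bar u$ lies in the open half-space $\{\tau>0\}$, on which $\Fc$ is single-valued and continuous; choosing test points $u=\bar u+te$ (which lie in $\dom(\Fc)$ for $t>0$ small enough, since $\bar\tau>0$) and letting $t\downarrow 0$ reduces the monotone inequality to $(\bar v-\Fc(\bar u))^\top e\le 0$, and letting $e$ and $-e$ range over all directions forces $\bar v=\Fc(\bar u)$, so $(\bar u,\bar v)\in\Fc\subseteq\Qc$.

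The boundary case $\bar\tau=0$ is the crux. Here I would feed in the family of $\Fc$ test points $u=(z,\tau)$ with $\tau\downarrow 0$. Taking first $z=\bar z$ shows $\bar z^\top M\bar z\le -\tau(\bar\kappa+\bar z^\top q)\to 0$, which with monotonicity of $M$ gives $\bar z^\top M\bar z=0$, so $\bar u\in\dom(\Ic)$ and $(M+M^\top)\bar z=0$ by \eqref{e-fact1}. For general $z=\bar z+\delta$ the same inequality simplifies dramatically: using $\bar z^\top M\bar z=0$ the quadratic curvature term $\delta^\top M\delta$ cancels, leaving $-(\bar w-M\bar z)^\top\delta-\tau(\bar\kappa+\bar z^\top q)\ge 0$ for all $\delta\in\reals^d$ and all $\tau>0$. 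Sending $\tau\downarrow0$ and letting $\delta$ range over all directions pins down $\bar w=M\bar z$, while $\delta=0$ leaves $\bar\kappa\le-\bar z^\top q$; hence $(\bar u,\bar v)\in\Ic\subseteq\Qc$. Finally, for $\bar\tau<0$ the point cannot lie in $\dom(\Qc)$, so I would derive a contradiction: feeding in the $\Ic$ test points $((0,0),(0,\kappa))$ with $\kappa\le0$, the monotone inequality becomes $\bar w^\top\bar z+(\bar\kappa-\kappa)\bar\tau\ge 0$, and letting $\kappa\to-\infty$ drives the left side to $-\infty$ (since $\bar\tau<0$), which is impossible.

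I expect the main obstacle to be the $\bar\tau=0$ analysis, where one must simultaneously recover the domain condition $\bar z^\top M\bar z=0$, the value $\bar w=M\bar z$, and the inequality $\bar\kappa\le-\bar z^\top q$ that defines $\Ic$; the key technical point is the cancellation of the curvature term $\delta^\top M\delta$, which is exactly what allows the limit $\tau\downarrow0$ to be taken cleanly along the approaching $\Fc$ test points. The role of the set-valued, downward-unbounded definition of $\Ic$ becomes visible only in the $\bar\tau<0$ case, where its unboundedness in the $\kappa$ coordinate is precisely what rules out any extension into $\{\tau<0\}$.
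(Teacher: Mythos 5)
Your proof is correct, but it takes a genuinely different logical route from the paper's. The paper does not check maximality against arbitrary test pairs directly: it invokes the theorem that every monotone operator admits a maximal monotone extension whose domain lies in the closure of the convex hull of its domain \cite[Thm.~21.9]{bauschke2011convex}, applied to $\Fc$. That confines candidate extension pairs to $\reals^d \times \reals_+$ from the outset, so the paper only needs to characterize pairs $(p,r)$ with $p_\tau = 0$ that are monotonically related to $\Fc$ --- the same algebra as your $\bar\tau = 0$ case, producing exactly the three conditions defining $\Ic$ --- and then closes with a sandwich argument: the maximal extension $\overline{\Fc}$ satisfies $\overline{\Fc} \subseteq \Fc \cup \Ic = \Qc$, and since $\Qc$ is monotone, maximality of $\overline{\Fc}$ forces $\overline{\Fc} = \Qc$. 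You instead verify the maximality criterion for $\Qc$ itself, which obliges you to treat the half-space $\bar\tau < 0$ that the paper's cited theorem lets it ignore entirely; your argument there --- sending $\kappa \to -\infty$ along the set-valued image $\Ic(0,0)$ --- is correct and has no counterpart in the paper. The trade-off: the paper's proof is shorter but leans on the abstract extension theorem (and yields, as a by-product, that $\Qc$ is the unique maximal extension of $\Fc$ with domain in $\reals^d\times\reals_+$); yours is self-contained and more elementary, and it makes explicit that the downward-unboundedness of $\Ic$ in the $\kappa$ coordinate is precisely what blocks monotone extensions into $\{\tau < 0\}$. Within the shared boundary case the two computations are minor reorderings of each other: you extract $\bar z^\top M \bar z = 0$ first (by testing with $z = \bar z$) and then exploit the cancellation of $\delta^\top M \delta$, whereas the paper first deduces $M^\top p_z + r_z = 0$ from the arbitrariness of $z$ and only then lets $\tau \downarrow 0$; both are valid, and both rest on the identity \eqref{e-fact1}.
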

\begin{proof}
Since $\Fc$ and $\Ic$ are both monotone, to show that $\Qc$ is monotone
we need only consider points $u \in \dom(\Fc)$ and $w \in \dom(\Ic)$.
Let $u = (u_z, u_\tau) \in \reals^d \times \reals_{++}$, $w = (w_z, 0) \in
\reals^d \times \reals$, and $(Mw_z, \kappa) \in \Ic(w)$, then
\begin{align*}
(\Qc(u) - \Qc(w))^\top(u - w) & = -\Qc(u)^\top w - \Qc(w)^\top u\\
&=-\Fc(u)^\top w - \Ic(w)^\top u\\
&\ni-w_z^\top(M u_z + q u_\tau) - u_z^\top(M w_z) - u_\tau \kappa\\
&=-u_z^\top(M + M^\top)w_z - u_\tau(\kappa + w_z^\top q)\\
&\geq 0,
\end{align*}
since $(M + M^\top)w_z = 0$ and $\kappa \leq -w_z^\top q$. Since it holds for
any $\kappa$ this establishes that $\Qc$ is monotone; next we shall show
maximality.

For any monotone operator there exists a
maximal monotone extension of it with domain contained in the closure of the
convex hull of its domain \cite[Thm.~21.9]{bauschke2011convex}.  The
domain of $\Fc$ is $\reals^d \times \reals_{++}$ which is convex, and so there
exists a maximal monotone extension of $\Fc$ with domain contained in $\reals^d
\times \reals_+$. Let $\overline{\Fc}$ denote such an extension.
We shall show that $\overline{\Fc}$ is unique and $\overline{\Fc} = \Qc$.

To construct the extension we need to find all pairs $(p, r)$ such that $\Fc
\cup \{p, r \}$ is monotone, with $p \in \reals^d \times \reals_+$.  Since $\Fc$ is
continuous on the interior of its domain we can use standard arguments to show
that no such extension pair with $p \in \dom(\Fc)$ exists \cite{bauschke2011convex}. So any
extension pairs $(p, r)$ must have $p$ on the boundary of $\reals^d \times
\reals_+$, which, if we let $p = (p_z, p_\tau)\in \reals^d \times \reals$, corresponds to points
with $p_\tau = 0$.  Let $u = (z,\tau) \in \dom(\Fc)$ and
consider points $p = (p_z, 0) \in \reals^d \times \reals$
and $r = (r_z, r_\tau) \in \reals^d \times \reals$. The monotone property
implies that $(p,r)$ must satisfy
\begin{align*}
0 &\leq (\Fc(u) - r)^\top(u - p) \\
&=-\Fc(u)^\top p - r^\top(u - p)\\
&=-p_z^\top M z - \tau p_z^\top q -r_z^\top (z - p_z) - r_\tau \tau.
\end{align*}
Since $z$ is arbitrary this implies that
$M^\top p_z + r_z = 0$, which in turn implies that
\[
  0 \leq -\tau (p_z^\top q + r_\tau)  -p_z^\top M p_z.
\]
Letting $\tau \rightarrow 0$
we get $p_z^\top M p_z \leq 0$,
but since $M$ is monotone this implies that
\begin{equation}
\label{e-max1}
p_z^\top M p_z = 0
\end{equation}
and so $Mp_z = -M^\top p_z$ from \eqref{e-fact1}, which yields
\begin{equation}
\label{e-max2}
Mp_z = r_z.
\end{equation}
Finally, since $\tau \geq 0$ we have
\begin{equation}
\label{e-max3}
r_\tau \leq -p_z^\top q.
\end{equation}
The conditions \eqref{e-max1}, \eqref{e-max2}, \eqref{e-max3} on $(p, r)$ are
exactly the conditions for $(p, r) \in \Ic$, from the definition of $\Ic$ in
Equation \eqref{e-Ic}. Thus all extension pairs must be elements of $\Ic$ and
so $\overline{\Fc} \subseteq \Fc \cup \Ic = \Qc$. However, it cannot be the case
that $\overline{\Fc} \subset \Qc$ strictly, as $\Qc$ is monotone that would
violate maximality of $\overline{\Fc}$.  Therefore we can conclude that
$\overline{\Fc} = \Qc$, \ie, $\Qc = \Fc \cup \Ic$ is a maximal monotone
extension of $\Fc$.
\end{proof}

\section{Douglas-Rachford splitting for LCPs}
\label{s-dr-for-lcp}
We have discussed how the feasibility and infeasibility conditions for an LCP can
be embedded into a single homogeneous MCP. In this section we apply DR splitting
to MCP$(\Qc, \Cc_+)$, the algorithm that solves
the homogeneous embedded problem is the
main result of this manuscript.

We have established that the operator $\Qc$ is maximal monotone (as is $N_\Cc$).
This implies that DR splitting applied to MCP$(\Qc, \Cc_+)$
will enjoy the convergence properties discussed in \S \ref{s-prelims}.
That is from any initial $w^0 \in \reals^{d+1}$ the procedure
for $k=0,1,\ldots$,
\begin{align}
  \begin{split}
    \label{e-drmcp}
\tilde u^{k+1} &= (I + \Qc)^{-1} w^k\\
u^{k+1} &= \Pi_{\Cc_+}(2\tilde u^{k+1} - w^k)\\
w^{k+1} &= w^k + u^{k+1} - \tilde u^{k+1},
  \end{split}
\end{align}
will converge to a fixed point from which we can derive a solution or a
certificate of infeasibility for the original LCP$(M, q, \Cc)$.
The remaining difficulty is the evaluation of the resolvent of $\Qc$,
which we discuss in the sequel.

By way of comparison, we can also apply DR splitting
to LCP$(M,q,\Cc)$ directly, which yields the following procedure; from any
initial $w^0 \in \reals^d$ for $k=0,1,\ldots$,
\begin{align}
  \begin{split}
    \label{e-drlcp}
\tilde u^{k+1} &= (I + M)^{-1} (w^k - q)\\
u^{k+1} &= \Pi_{\Cc}(2\tilde u^{k+1} - w^k)\\
w^{k+1} &= w^k + u^{k+1} - \tilde u^{k+1}.
  \end{split}
\end{align}
If a solution to LCP$(M,q,\Cc)$ exists then this procedure will converge,
otherwise it has no fixed point and will not converge.

\subsection{Evaluating the resolvent of $\Qc$}
Since $\Qc$ is maximal
monotone we know that the resolvent is single-valued and has full domain
\cite{ryu2016primer}. At time-step $k$ of DR splitting we must solve a system of equations
involving the resolvent of $\Qc$, that is solve
\[
   \begin{bmatrix} z \\ \tau \end{bmatrix} =
     (I + \Qc)^{-1} \begin{bmatrix} \mu^k \\ \eta^k \end{bmatrix}
\]
for a fixed right-hand side $(\mu^k, \eta^k) \in \reals^d \times\reals$.
Suppose for a moment we know that $(z,\tau) \in \dom(\Fc)$, \ie, $\tau > 0$, then using Equation
\eqref{e-Fc} we must solve
\begin{align}
\begin{split}
\label{e-resolvQ}
  (I + M)z + q \tau &= \mu^k\\
  \tau^2 -  \tau(\eta^k + z^\top q) - z^\top M z &= 0,
\end{split}
\end{align}
for $z \in \reals^d$ and $\tau > 0$.  Since $M$ is monotone we have
$z^\top M z \geq 0$, and so one root of the quadratic equation is
nonnegative and one is nonpositive, and since $(z, \tau) \in \dom{\Qc}$ it is the
nonnegative root that corresponds to the solution.
The solution to these
equations also encodes the solution when $(z,\tau) \in \dom(\Ic)$, since if
$z^\top M z = 0$ then the nonnegative root is given by $\tau = \max(0, \eta^k +
z^\top q)$.  In other words, $\tau = 0$ if and only if $\eta^k \leq- z^\top q$
and $z^\top M z = 0$, which are the conditions for $(z,\tau) \in \dom(\Ic)$ in
Equation \eqref{e-Ic}.  This means the solution of \eqref{e-resolvQ} for $\tau
\geq 0$ yields the resolvent of $\Qc$ for any right-hand side.
Let us denote by
\[
p^k = (I + M)^{-1} \mu^k,\quad r = (I + M)^{-1} q,
\]
then we have
\[
  z = p^k - r \tau
\]
for unknown $\tau \geq 0$, and note that since $r$ is constant for all iterations we
only need to compute it once at the start
of the procedure and then reuse this cached value thereafter. To solve for $\tau$
we substitute $z = p^k - r\tau$ into the quadratic Equation \eqref{e-resolvQ}
yielding
\begin{align}
\begin{split}
\label{e-quad-tau}
  0 &= \tau^2 -  \tau(\eta^k + z^\top q) - z^\top ((I + M) z - z)\\
  &= \tau^2 -  \tau(\eta^k + (p^k - r \tau)^\top q) - (p^k - r \tau)^\top
  (\mu^k -q \tau - p^k + r \tau)\\
  &=\tau^2(1 + r^\top r) + \tau (r^\top \mu^k - 2 r^\top p^k- \eta^k) +
  (p^k)^\top (p^k -\mu^k),
\end{split}
\end{align}
and for brevity we denote $\verb|root|_+(\mu^k, \eta^k, p^k, r)$ to be the
nonnegative root of the quadratic Equation \eqref{e-quad-tau} when evaluated
with input values $(\mu^k, \eta^k, p^k, r)$.
Specifically, let
$a = 1 + r^\top r$, $b^k = r^\top \mu^k - 2 r^\top p^k- \eta^k$, and $c^k = (p^k)^\top
(p^k -\mu^k)$, then
\begin{equation}
\label{e-rootplus}
  \verb|root|_+(\mu^k, \eta^k, p^k, r) = \left(-b^k + \sqrt{(b^k)^2 - 4ac^k}\right) / 2a.
\end{equation}
Since the resolvent has full domain it always has a real-valued solution
for any input, which implies that the above quadratic equation always has real
roots. This fact can also be seen directly from the equations by noting that
$(b^k)^2 \geq 0$, $a = 1 + r^\top
r \geq 0$ and $c^k = (p^k)^\top (p^k -\mu^k) = -(p^k)^\top M p^k  \leq 0$
since $M$ is monotone, and so $(b^k)^2 -4ac^k \geq 0$.

\subsection{Final algorithm}
With the resolvent of $\Qc$ in place we are ready to present DR splitting
applied to problem \eqref{e-homog-monoinc} as Algorithm~\ref{a-drmcp}.  The
$u^k, \tilde u^k, w^k$ terms in Algorithm~\ref{a-drmcp} are simply to relate the
procedure to that described in Equation \eqref{e-drmcp}.

Note that neither Algorithm \ref{a-drmcp} nor the procedure described in
Equation \eqref{e-drlcp} has any explicit hyper-parameters (\eg, step-size,
\etc), though in practice the relative scaling of the problem data can have a
large impact on the convergence of the algorithm and most practical solvers
based on DR splitting or ADMM implement some sort of heuristic data rescaling
\cite{ocpb:16, fougner2018parameter, chu2013primal, giselsson2016linear}.

Algorithm \ref{a-drmcp} and the procedure in Equation \eqref{e-drlcp} differ
only in that Algorithm \ref{a-drmcp} maintains an additional set of scalar
parameters ($\tau$, $\tilde \tau$, and $\eta$), and consequently the
computational costs of the two algorithms are essentially the same.  However,
Equation \eqref{e-drlcp} will not converge if the LCP is infeasible, whereas
Algorithm \ref{a-drmcp} will always converge and will produce a certificate of
infeasibility should one exist.  In fact, the procedure in Equation
\eqref{e-drlcp} can be interpreted as Algorithm \ref{a-drmcp} where we fix the
scalar parameters $\tau = \tilde \tau = \eta = 1$. It may be the case that this
is not the best choice for any particular problem and allowing these scale
parameters to vary makes the problem easier, even for feasible cases. We
shall present some preliminary evidence of this effect in the numerical
experiments sections.

For the special case of QCPs with $P=0$ the problem reduces to a linear cone
program of the form that the original SCS algorithm \cite{ocpb:16} was developed
to tackle. Unsurprisingly, we recover SCS from Algorithm~\ref{a-drmcp}
in this case (modulo the change of variables required to go from ADMM to DR
splitting), with the minor difference that Algorithm~\ref{a-drmcp} constrains
the $\tilde \tau^k$ variable to always be nonnegative which is not the case
in SCS.

\begin{algorithm}[t]
\caption{DR splitting for the homogeneous embedding of LCPs}
\begin{algorithmic}
\State {\bf Input:} LCP$(M, q, \Cc)$
\State compute $r = (I+M)^{-1} q$
\State initialize $\mu^0 \in \reals^d$, $\eta^0 > 0$
\For{$k=0,1,\ldots$}
\[
\begin{array}{rcl}
\tilde u^{k+1} &:& \left\{
\begin{array}{rcl}
  p^k &=& (I+M)^{-1} \mu^k\\
  \tilde\tau^{k+1} &=& \verb|root|_+(\mu^k, \eta^k, p^k, r)\\
  \tilde z^{k+1} &=& p^k - r\tilde \tau^{k+1}
\end{array}
\right.\\
\\
u^{k+1} &:& \left\{
\begin{array}{rcl}
  z^{k+1} &=& \Pi_{\Cc}(2 \tilde z^{k+1} - \mu^k)\\
  \tau^{k+1} &=& \Pi_{\reals_+}(2 \tilde \tau^{k+1} - \eta^k)
\end{array}
\right.\\
\\
w^{k+1} &:& \left\{
\begin{array}{rcl}
  \mu^{k+1} &=& \mu^k + z^{k+1} - \tilde z^{k+1}\\
  \eta^{k+1} &=& \eta^k + \tau^{k+1} - \tilde\tau^{k+1}
\end{array}
\right.
\end{array}
\]
\EndFor
\end{algorithmic}
\label{a-drmcp}
\end{algorithm}

\subsection{Eliminating the trivial solution}
Since problem \eqref{e-homog-monoinc} is homogeneous the point $u=0$ is a solution
no matter the data, and we might worry that our approach will converge to zero,
or to a point so close to zero that it is impossible to recover a solution to
the original LCP in a numerically stable way.  Here we generalize a result from
\cite{ocpb:16} to show that this cannot happen so long as the procedure is
initialized correctly.
\begin{lemma}
\label{l-eliminate-trivial}
Fix $w^0 \in \reals^d$ and consider the sequence $w^{k+1} = \Tc(w^k)$ for
$k=0,1, \ldots$, generated by $\Tc : \reals^{d} \rightarrow \reals^{d}$. If
\begin{enumerate}
\item $\Tc$ is positively homogeneous, \ie, $\Tc(tv) = t\Tc(v)$ for any $t > 0$,
$v \in \reals^d$,
\item $\Tc$ has a non-zero fixed point $w^\star \in \reals^p$
which satisfies $(w^\star)^\top w^0 > 0$,
\item $\Tc$ is non-expansive toward any fixed point, \ie, $\|\Tc(v) -
w^\star\|_2 \leq \|v - w^\star\|_2$ for any $v \in \reals^d$,
\end{enumerate}
then for all $k$,
\[
  \|w^k\|_2 \geq \frac{(w^\star)^\top  w^0}{\|w^\star\|_2} > 0.
\]
\end{lemma}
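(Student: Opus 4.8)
The plan is to exploit positive homogeneity to manufacture an entire \emph{ray} of fixed points, and then use non-expansiveness toward each of them to trap the iterates away from the origin. First I would observe that since $w^\star$ is a fixed point and $\Tc$ is positively homogeneous, every scaled point $\alpha w^\star$ with $\alpha > 0$ is again a fixed point: $\Tc(\alpha w^\star) = \alpha \Tc(w^\star) = \alpha w^\star$. This gives a one-parameter family of fixed points, to each of which the non-expansiveness hypothesis (3) applies.

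Next I would apply assumption (3) with the fixed point $\alpha w^\star$ and $v = w^k$ to get $\|w^{k+1} - \alpha w^\star\|_2 \le \|w^k - \alpha w^\star\|_2$, so that for each fixed $\alpha$ the sequence $\|w^k - \alpha w^\star\|_2$ is non-increasing in $k$. Iterating back to $k=0$ yields $\|w^k - \alpha w^\star\|_2 \le \|w^0 - \alpha w^\star\|_2$. Squaring both sides and cancelling the common $\alpha^2 \|w^\star\|_2^2$ term leaves
\[
\|w^k\|_2^2 - 2\alpha (w^\star)^\top w^k \le \|w^0\|_2^2 - 2\alpha (w^\star)^\top w^0,
\]
an inequality valid for \emph{every} $\alpha > 0$.

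The key step, and the one move I expect to be the main (if modest) obstacle since it is the only non-obvious one, is to divide through by $\alpha$ and let $\alpha \to \infty$. The terms quadratic in the norms vanish in the limit, leaving the linear inequality $(w^\star)^\top w^k \ge (w^\star)^\top w^0$; in other words, the inner product of the iterate with the fixed-point direction never drops below its initial value. Finally, I would close with Cauchy--Schwarz, $(w^\star)^\top w^k \le \|w^\star\|_2 \|w^k\|_2$, which combined with the previous inequality and assumption (2) gives
\[
\|w^k\|_2 \ge \frac{(w^\star)^\top w^k}{\|w^\star\|_2} \ge \frac{(w^\star)^\top w^0}{\|w^\star\|_2} > 0,
\]
as claimed. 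The entire argument is short; the only real insight is recognizing that homogeneity upgrades a single fixed point into a ray, so that the $\alpha \to \infty$ limit can isolate the linear term and convert a statement about distances into the desired lower bound on the norm.
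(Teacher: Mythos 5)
Your proof is correct and follows essentially the same route as the paper's: use positive homogeneity to turn $w^\star$ into a ray of fixed points $\alpha w^\star$, apply non-expansiveness toward each, let $\alpha \to \infty$ to kill the quadratic terms, and finish with Cauchy--Schwarz. The only cosmetic difference is ordering --- you extract the clean intermediate inequality $(w^\star)^\top w^k \geq (w^\star)^\top w^0$ before invoking Cauchy--Schwarz, whereas the paper applies Cauchy--Schwarz first and takes the limit last; the substance is identical.
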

\begin{proof}
Since $\Tc$ is positively homogeneous the point $t w^\star$ is also a fixed
point for any $t > 0$, and since $\Tc$ is non-expansive toward any fixed point
we have
\[
\begin{array}{lllll}
  &&\|w^k - t w^\star\|_2^2 &\leq&  \|w^0 - t w^\star\|_2^2\\
  &\Rightarrow&  - 2 t (w^\star)^\top w^k &\leq&  \|w^0\|_2^2 - 2 t (w^\star)^\top w^0\\
  &\Rightarrow&  \|w^\star\|_2 \|w^k\|_2 &\geq& (w^\star)^\top w^0- \|w^0\|_2^2
/ 2t,
\end{array}
\]
where we used Cauchy-Schwarz in the last line, and letting $t \rightarrow
\infty$ yields the desired result.  \end{proof} If the operator $\Tc$
corresponds to one step of DR splitting then it is \emph{globally} non-expansive
\cite{bauschke2011convex}. When applied to MCP$(\Qc, \Cc_+)$ DR splitting is
positively homogeneous, since both $\Qc$ and $N_\Cc$ are positively homogeneous.
Finally, if we assume that either an optimal solution or a certificate of
infeasibility exists for LCP$(M, q, \Cc)$ then it has a non-zero fixed point,
and since $w^\star \in u^\star + \Qc(u^\star)$ \cite{bauschke2011convex}, where
$u^\star$ is a solution to MCP$(\Qc, \Cc_+)$, it is easy to initialize in such a
way that the condition $(w^\star)^\top w^0 > 0$ is satisfied. For example, we
can set the last entry of $w^0$ to one, and the rest of the entries zero.
Therefore under normal conditions DR splitting satisfies the conditions of the
lemma and so Algorithm \ref{a-drmcp} will converge to a point that is bounded
away from zero.

\subsection{Convergence of Algorithm \ref{a-drmcp}}
\label{s-convergence}
The convergence guarantees for DR splitting tell us that $u^k \rightarrow
u^\star$, $w^k \rightarrow w^\star \in u^\star + \Qc(u^\star)$ and $\|u^k -
\tilde u^k\| \rightarrow 0$, where $u^\star$ is a solution to MCP$(\Qc, \Cc_+)$
\cite[Thm.~26.11]{bauschke2011convex}.
A solution always exists since $u^\star = 0$ is a solution, though we know from
Lemma~\ref{l-eliminate-trivial} that the procedure will not converge to zero
under benign conditions.

Consider the sequence defined as $v^{k+1} =
u^{k+1} + w^{k} - 2 \tilde u^{k+1}$ for $k=0,1,\ldots$. This sequence converges to $\Qc(u^\star)$ since
\begin{align*}
v^{k+1} = u^{k+1} + w^k - 2 \tilde u^{k+1} \rightarrow w^\star - u^\star \in \Qc(u^\star).
\end{align*}
Furthermore, substituting in for $u^{k+1}$ from Equation \eqref{e-drmcp}
combined with the Moreau decomposition \cite{parikh2014proximal,o2019hamiltonian} yields
\begin{align*}
v^{k+1} &= u^{k+1} + w^k - 2 \tilde u^{k+1}\\
&=\Pi_{\Cc_+}(2 \tilde u^{k+1} - w^k) + w^k - 2 \tilde u^{k+1}\\
&=\Pi_{\Cc_+^*}(-2 \tilde u^{k+1} + w^k).
\end{align*}
That is, $u^{k+1}$ and $v^{k+1}$ correspond to the orthogonal Moreau decomposition of $2 \tilde u^{k+1}
- w^k$ onto the cone $\Cc_+$ and its polar (negative dual) cone, which implies
that $v^k \in \Cc_+^*$ and $u^k \perp v^k$ for all $k$.
In summary, the iterates $(u^k, v^k)$ satisfy
\begin{equation}
\label{e-uv-lcp}
\Cc_+ \ni u^k \perp v^k \in \Cc_+^*,
\end{equation}
for all $k$, and the condition $v^k \in \Qc(u^k)$ holds in the limit, \ie,
the pair $(u^k, v^k)$ eventually satisfy the conditions in Equation
\eqref{e-homog-embed}.
Now take the special case of a QCP where $u^k = (x^k, y^k, \tau^k) \in \reals^n
\times \reals^m \times \reals$ and $v^k = (0, s^k, \kappa^k) \in \{0\}^n \times
\reals^m \times \reals$. If $\tau^k \rightarrow \tau^\star > 0$ then, since
$v^k$ converges to $\Qc(u^k)$, these iterates will in the limit provide a
solution which satisfies the KKT conditions \eqref{e-kkt}, \ie, $(x^k
/ \tau^k, s^k/\tau^k, y^k/\tau^k) \rightarrow (x^\star, s^\star, y^\star)$.  Due
to Equation \eqref{e-uv-lcp} we know that $s^k/\tau^k \in \Kc$, $y^k/\tau^k \in
\Kc^*$, and $s^k/\tau^k \perp y^k /\tau^k$ for all $k$ so three of the KKT
conditions are always satisfied by this sequence.  Therefore to check for
optimality we only need to test that the primal residual, dual residual, and the
duality gap defined in Equation \eqref{e-duality-gap} are less than some
tolerance. On the other hand if $\kappa^k \rightarrow \kappa^\star > 0$ then the
iterates will converge to a certificate of primal infeasibility
\eqref{e-qp-infeas} or dual infeasibility \eqref{e-qp-unbdd}. To check for
infeasibility we only need to check that the certificate residuals are below
some tolerance and that either $c^\top x^k < 0$ or $b^\top y^k < 0$, since both
$s^k$ and $y^k$ satisfy the cone membership requirement.

\section{Implementation details for QCPs}
The algorithm we have derived applies to any monotone LCP. In this section we
discuss how to perform the steps in Algorithm \ref{a-drmcp} efficiently for the
QCP special case.

\subsection{Solving the linear system}
In both the procedure described in Equation \eqref{e-drlcp} and Algorithm
\ref{a-drmcp} we need to solve a system of equations with the same matrix at
every iteration. For the specific case of a QCP the
linear system can be written
\[
\begin{bmatrix} I + P & A^T  \\ A & -I \end{bmatrix}\begin{bmatrix} x \\ y
\end{bmatrix} = \begin{bmatrix} \mu_x \\ -\mu_y \end{bmatrix},
\]
for $(x,y) \in \reals^n \times \reals^m$ and right-hand side $(\mu_x, \mu_y) \in
\reals^n \times \reals^m$.
There are two main ways we consider to solve this system of equations. The first
way is a \emph{direct} method, which solves the system exactly by initially
computing a sparse permuted $LDL^\top$ factorization of the matrix
\cite{davis2006direct}, caching this factorization, and reusing it every
iteration thereafter. In the majority of cases the factorization cost is greater
than the solve cost using the factors, so once the initial work is done the subsequent
iterations are much cheaper.  Since $P \succeq 0$ this matrix above is
\emph{quasidefinite}, which implies that the $LDL^\top$ factorization exists for
any symmetric permutation \cite{vanderbei1995symmetric}.

Alternatively, we can apply an \emph{indirect} method to solve the system
approximately at each iteration. DR splitting is robust
to inexact evaluations of the resolvent operators and convergence can
still be guaranteed so long as the errors satisfy a summability condition
\cite{eckstein1992douglas}.  To use an indirect method we first reduce this
system by elimination to
\begin{align*}
x &= (I+P + A^TA)^{-1}(\mu_x - A^T \mu_y)\\
y &= \mu_y - Ax,
\end{align*}
and note that the matrix $I + P + A^TA$ is positive definite. This system is then
solved with a conjugate gradient (CG) or similar method
\cite{nocedal2006numerical, ocpb:16}.  One iteration of CG requires multiplications with
the matrices $P$, $A$, and $A^\top$. If these matrices are very sparse, or fast
multiplication routines exist for them, then one CG step can be very fast. We run
CG until the residual satisfies an error bound, at which point we return the
approximate solution.  We can use techniques from the literature, such as
warm-starting CG from the previous solution and using a preconditioner to
improve the convergence \cite{bredies2015preconditioned}.

\subsection{Cone projection}
Most convex optimization problems of interest can be expressed using a
combination of the `standard' cones, namely the positive orthant, second-order
cone, semidefinite cone, and the exponential cone \cite{nesterov1994interior,
nemirovski2007advances}.  These cones all have
well-known projection operators \cite{parikh2014proximal}. Of these, only the
semidefinite cone projection provides a computational challenge since it
requires an eigen-decomposition, which may be costly.  If our problem consists of
the Cartesian product of many of these cones then each of these projections can
be carried out independently and in parallel.

Alternatively, since the cone projection step is totally separated from the rest of the
algorithm, we can incorporate any number of problem-specific cones with
their own projection operators, which may perform better in practice
than reformulating the problem to use the standard cones.
The restriction that the set be a cone is not
too stringent, because we can write many convex constraints as a combination of a
conic constraint and an affine constraint. In particular the set defined by a
convex function $f$ can be transformed as follows
\[
\{s \mid f(s) \leq 0\}
\
\Rightarrow
\
\{ (t, s) \mid t f(s / t) \leq 0, t \geq 0\} \cap \{(t, s) \mid t = 1\}
\]
which is a combination of a convex cone and
an affine equality constraint, which fits our framework. If the original convex
set has an efficient projection operation, then in the worst-case we can perform
a bisection search over $t \geq 0$ using the projection operator as a
subroutine.  In most cases the dominant cost of Algorithm \ref{a-drmcp} will be
solving the linear system, so the additional cost of a bisection to compute the
cone projection will typically be negligible. As an example, consider the `box'
cone defined as
\[
\Kc_{\mathrm{box}} = \{ (t, s) \mid t l \leq s \leq t u, t \geq 0\}
\]
where $l, u \in \reals^d$ are data. When combined with the constraint that
$t=1$ this represents box constraints on the variable $s$, which is commonly used
in LP and QP solvers. Projection onto this cone can be done via Newton's method
on the scalar variable $t$, which typically only requires a few iterations to reach
convergence.  This cone is supported in the SCS v3.0 solver.

\section{Numerical experiments}
\subsection{Comparing Algorithm \ref{a-drmcp} to Equation \eqref{e-drlcp}}

Here we compare the computational efficiency of using DR splitting applied to
the homogeneous embedding (Algorithm \ref{a-drmcp}) and DR splitting applied
directly to the original problem (Equation \eqref{e-drlcp}) on a range of
synthetic problems.  We constructed feasible, primal infeasible, and unbounded
(dual infeasible) QCPs over the positive orthant and compared the
number of iterations taken by Equation \eqref{e-drlcp} with infeasibility
detection using successive iterates and Algorithm \ref{a-drmcp}. Since the cost
per iteration is essentially identical for both approaches the number of
iterations determines the overall solve time.  The results on diverging
sequences producing infeasibility certificates from Banjac et
al.~\cite{banjac2019infeasibility}, and Liu et al.~\cite{liu2017new} do not
immediately carry over to the case of Equation \eqref{e-drlcp} since they only
hold for ADMM applied to convex functions, and the matrix $M$ is not the
subdifferential of a convex function.  That being said, we can still use the
techniques and compare the performance in practice. In the sequel we shall
compare solvers that do come with theoretical guarantees.

We randomly generated $1000$ feasible, infeasible, and unbounded problems of
size $n=100$ and $m=150$. For feasible problems we declared the problem to be
solved when the maximum $\ell_\infty$-norm KKT violation was $10^{-6}$.
Similarly, for infeasible and unbounded problems we stopped when the algorithms
produces a valid certificate with $\ell_\infty$-norm tolerance of $10^{-6}$.
For each problem we computed the ratio of the number of iterations required by
Equation \eqref{e-drlcp} to the number required by Algorithm \ref{a-drmcp} to
solve the problem or certify infeasibility. A higher ratio indicates that
Algorithm \ref{a-drmcp} requires fewer iterations to solve the problem than
Equation \eqref{e-drlcp}.   We present histograms of the performance ratio in
Figures~\ref{f-feas-qp}, \ref{f-infeas-qp}, and~\ref{f-unbdd-qp} for feasible,
infeasible, and unbounded problems respectively.  Evidently, generating
certificates from the homogeneous embedding can be orders of magnitude faster;
the geometric mean of the ratio on infeasible problems was $49.0$ and on
unbounded problems was $299.1$.  In fact our approach was not slower on a single
instance.  The successive differences approach failed to find a certificate of
infeasibility within the iteration limit of $10^5$ in $27$ problems.  For
feasible problems the approach based on the homogeneous embedding is often
quicker to find a solution, sometimes by a significant factor.  The geometric
mean of the ratios was $1.6$, and the homogeneous embedding approach was faster
in $987$ of the $1000$ problems.

In Figure \ref{f-traces} we show how the maximum $\ell_\infty$-norm residuals
converge on randomly selected feasible, infeasible, and unbounded problems.
For the feasible problem we plot the maximum KKT condition residual and for the
infeasible and unbounded problems we plot the maximum residual from a valid
certificate.  For infeasible and unbounded problems the approach
based on the homogeneous embedding converges to a certificate extremely rapidly,
but the approach based on diverging iterates takes many iterations to produce
a certificate.  For the feasible problem the difference is less stark, but
Algorithm \ref{e-drmcp} still converges faster, reaching the tolerance
in about half the number of iterations required by Equation \eqref{e-drlcp}.

\begin{figure}[h]
\centering
\begin{subfigure}{0.32\textwidth}
\centering
\includegraphics[width=\linewidth]{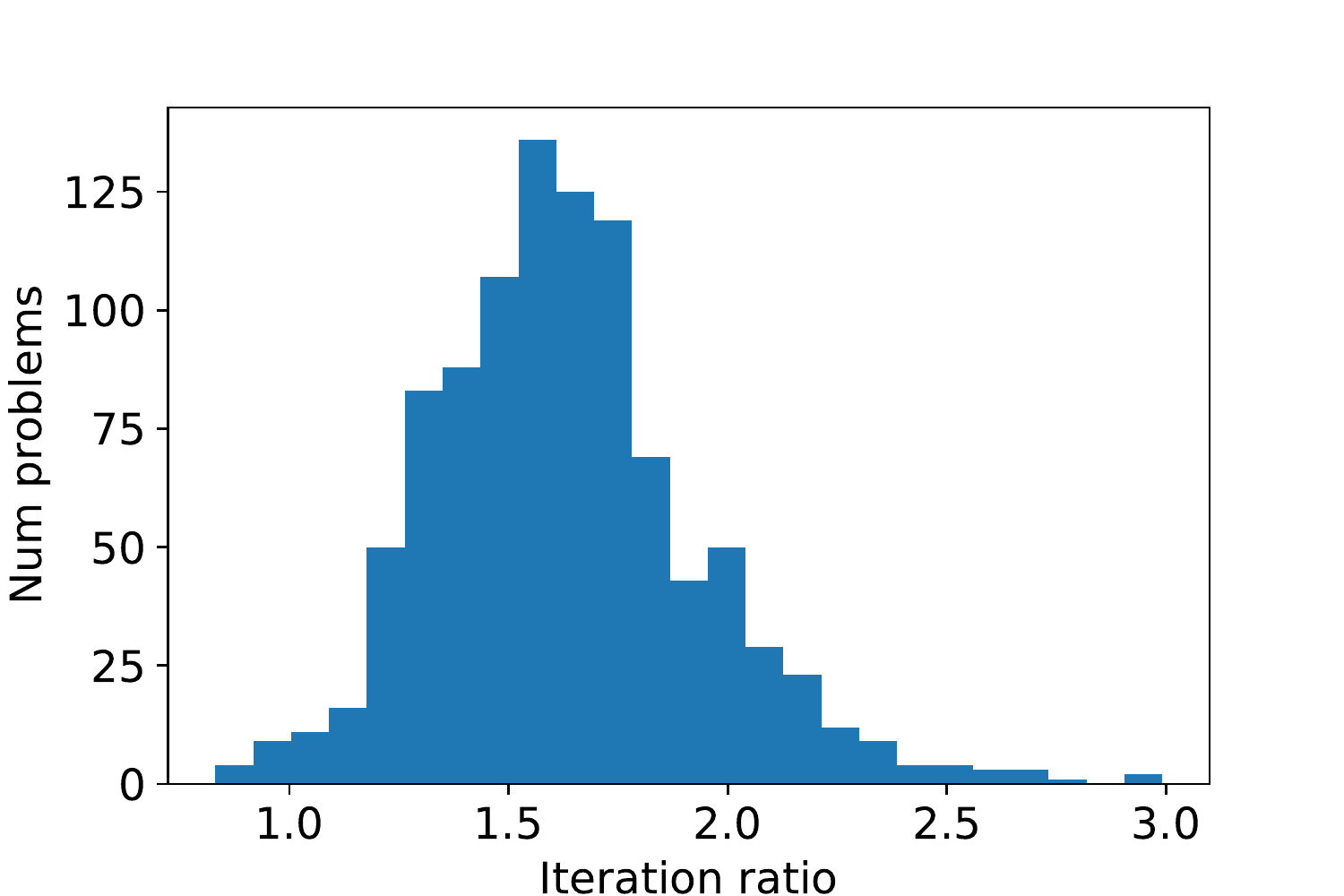}
\caption{Feasible QCPs.}
\label{f-feas-qp}
\end{subfigure}
\begin{subfigure}{0.32\textwidth}
\centering
\includegraphics[width=\linewidth]{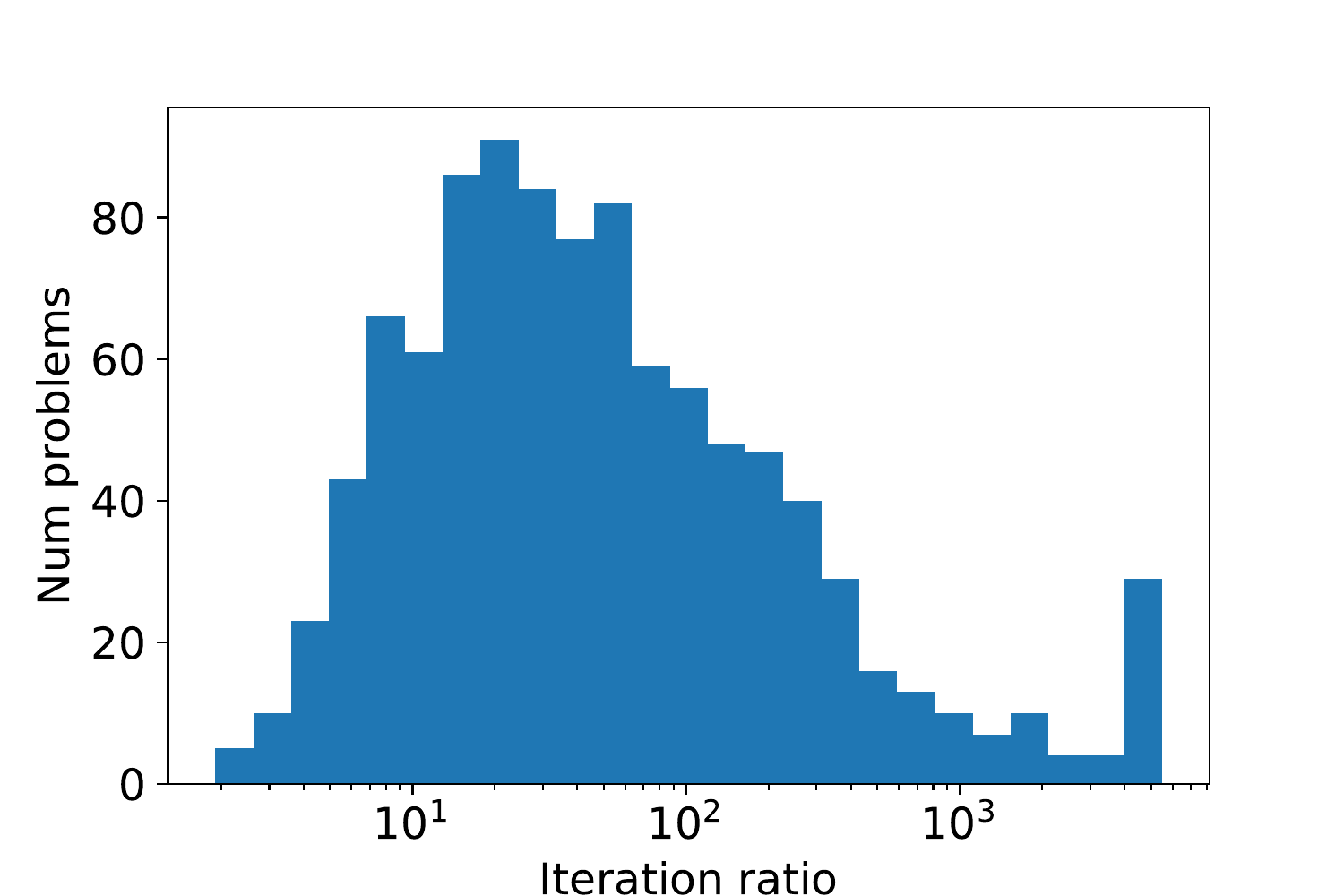}
\caption{Infeasible QCPs.}
\label{f-infeas-qp}
\end{subfigure}
\begin{subfigure}{0.32\textwidth}
\centering
\includegraphics[width=\linewidth]{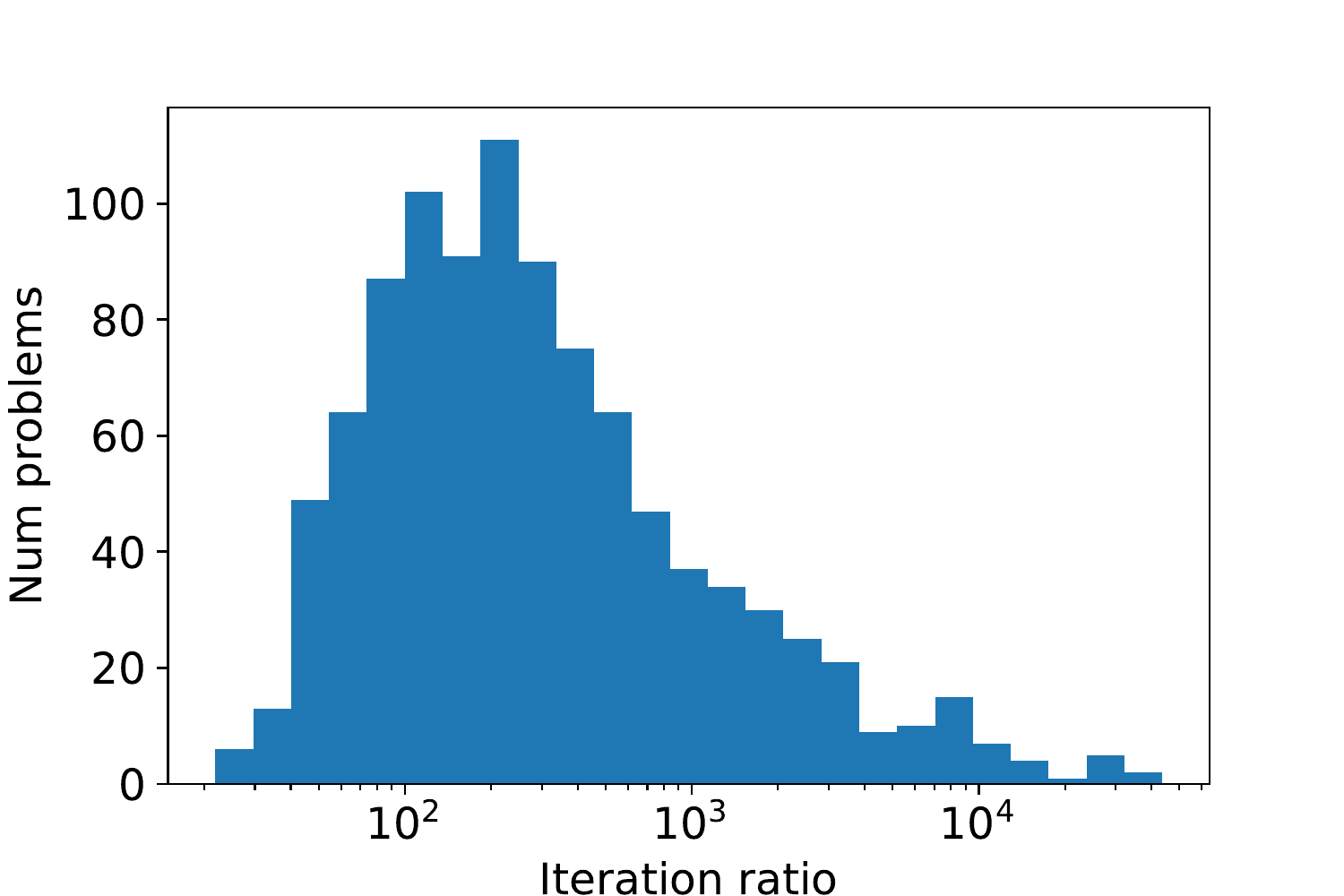}
\caption{Unbounded QCPs.}
\label{f-unbdd-qp}
\end{subfigure}
\caption{Histograms of iteration count ratio of Equation~\eqref{e-drlcp}
to Algorithm~\ref{a-drmcp}.  Higher ratios indicates that our approach is taking
fewer iterations to reach the same accuracy.}
\label{f-infeas-unbdd-qp}
\end{figure}

\begin{figure}[h]
\centering
\begin{subfigure}{0.32\textwidth}
\centering
\includegraphics[width=\linewidth]{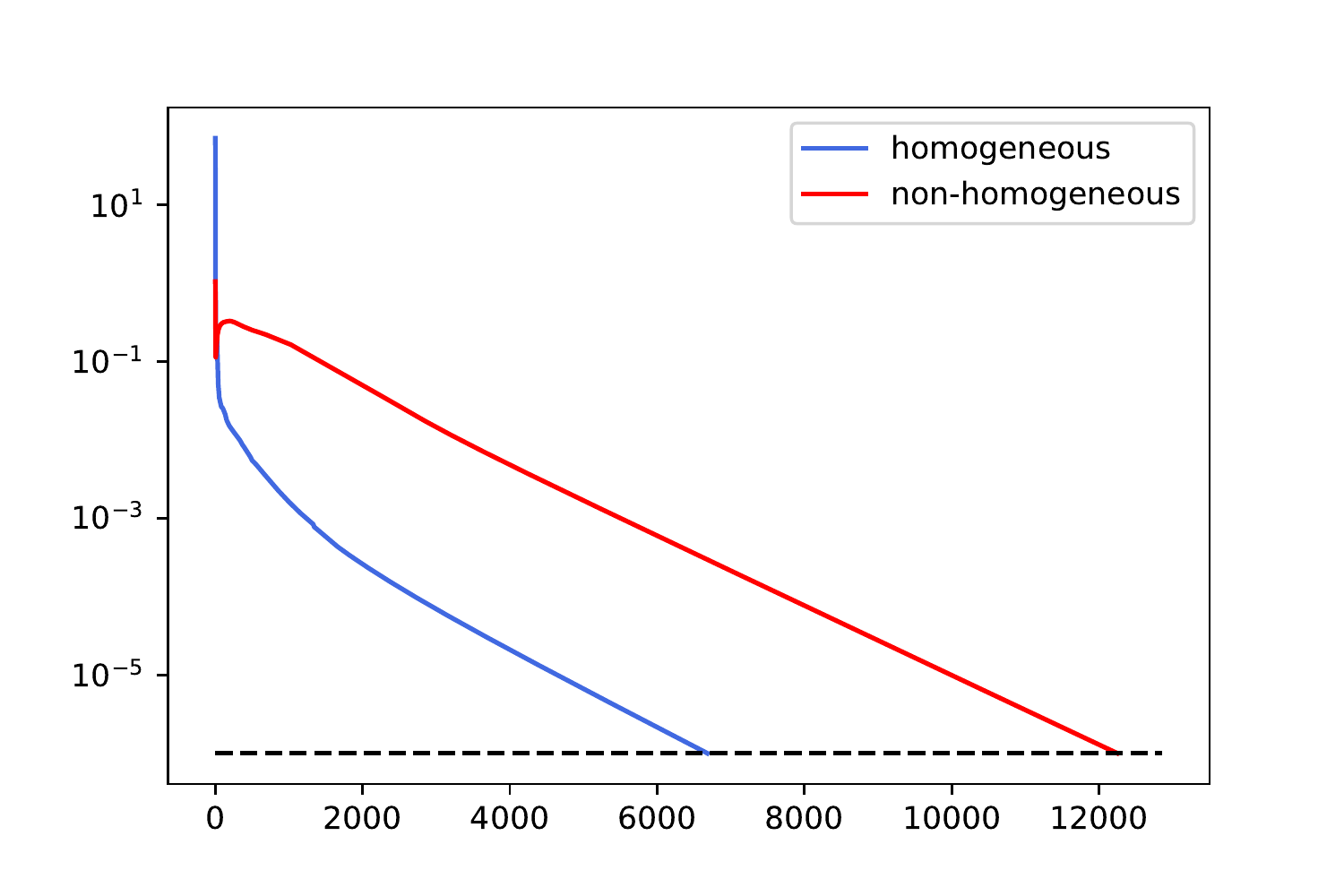}
\caption{Feasible QCP.}
\end{subfigure}
\begin{subfigure}{0.32\textwidth}
\centering
\includegraphics[width=\linewidth]{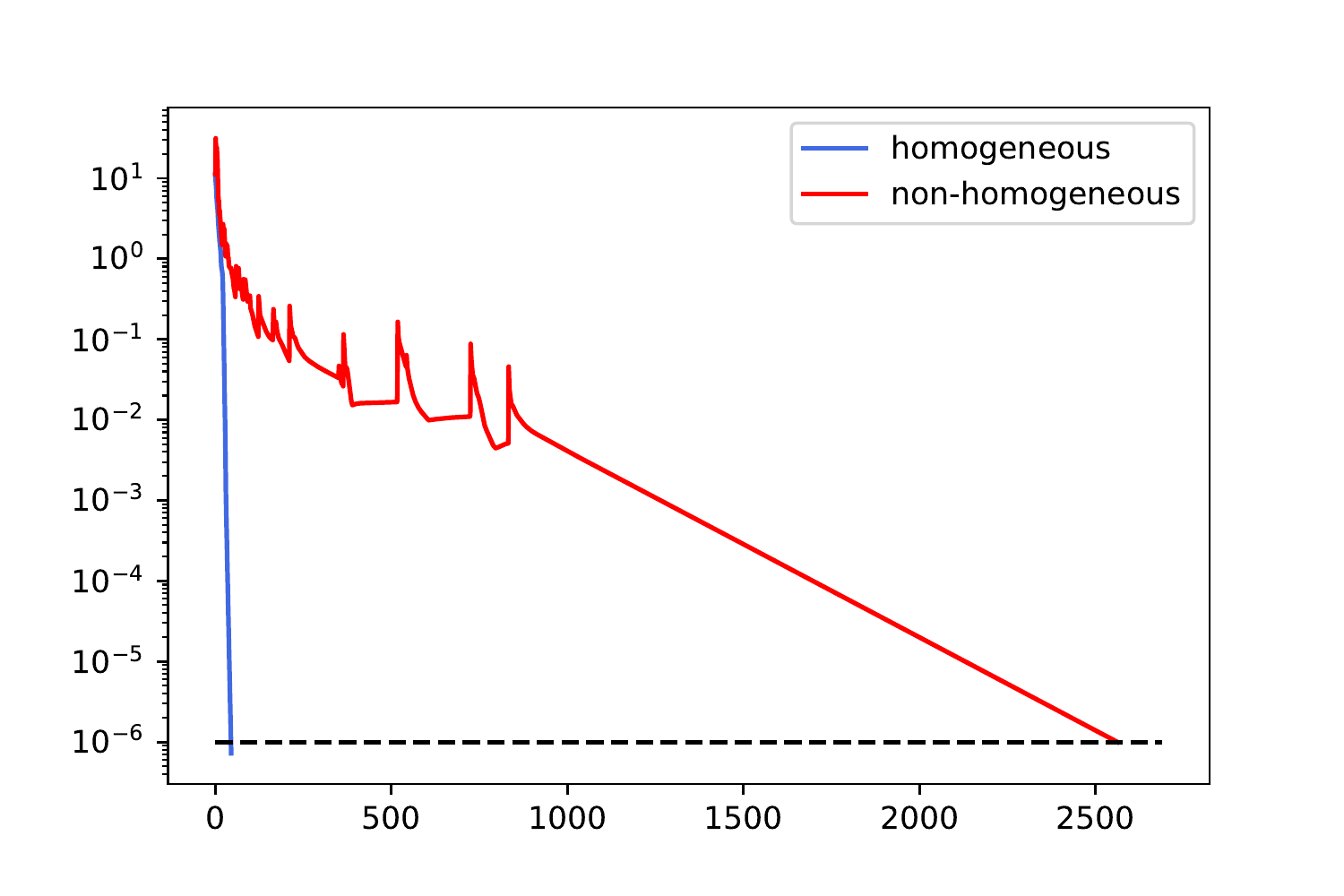}
\caption{Infeasible QCP.}
\end{subfigure}
\begin{subfigure}{0.32\textwidth}
\centering
\includegraphics[width=\linewidth]{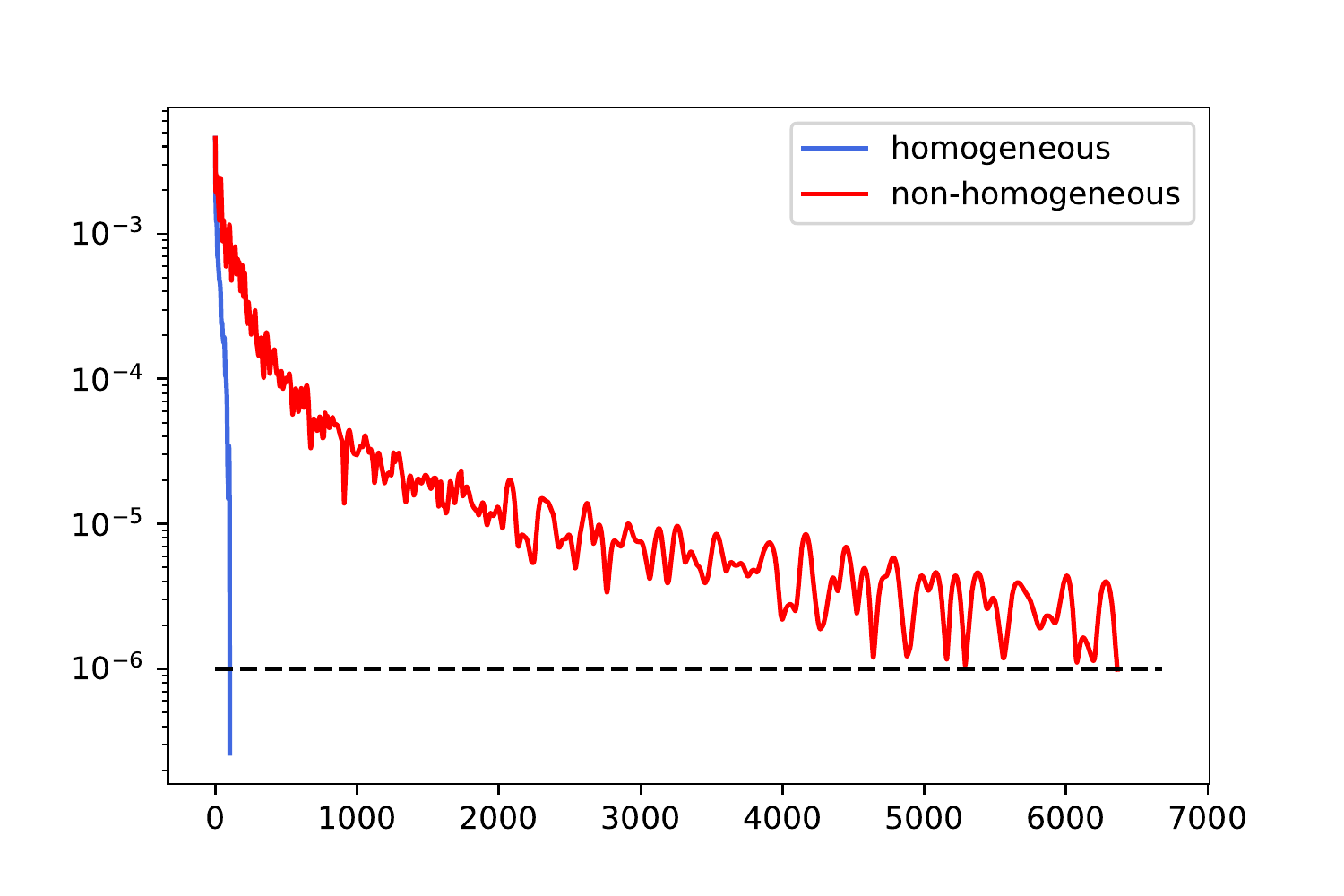}
\caption{Unbounded QCP.}
\end{subfigure}
\caption{Trace of max residuals under Equation~\eqref{e-drlcp} (non-homogeneous)
and Algorithm~\ref{a-drmcp} (homogeneous) for randomly selected example problems.}
\label{f-traces}
\end{figure}

\subsection{Comparing open-source solvers}

In this section we compare SCS v3.0, our open-source implementation of
Algorithm \ref{a-drmcp} for QCPs, to other available open-source solvers that
apply ADMM directly to QCPs. In particular we compare to OSQP
\cite{stellato2018osqp} and COSMO \cite{garstka_2019} both of which rely on
diverging iterates to generate certificates of infeasibility.

As discussed in \S \ref{s-convergence} the iterates produced by SCS v3.0 always
satisfy the cone membership and complementarity KKT conditions defined in Equation \eqref{e-kkt}.
Therefore to say that a problem is solved we need to check if the primal residual,
dual residual, and duality gap are all below a certain tolerance. Specifically,
SCS v3.0 terminates when it has found $x \in \reals^n$, $ s \in \reals^m$,
and $y \in \reals^m$ that satisfy
\begin{align*}
\|Ax + s - b\|_\infty &\leq \epsilon_\mathrm{abs} + \epsilon_\mathrm{rel} \max(\|Ax\|_\infty, \|s\|_\infty, \|b\|_\infty) \\
\|Px + A^\top y - c \|_\infty &\leq \epsilon_\mathrm{abs} +
\epsilon_\mathrm{rel} \max(\|Px\|_\infty, \|A^\top y\|_\infty, \|c\|_\infty)\\
|x^\top Px + c^\top x + b^\top y| &\leq \epsilon_\mathrm{abs}
+ \epsilon_\mathrm{rel} \max(|x^\top P x|, |c^\top x|, |b^\top y|),
\end{align*}
where $\epsilon_\mathrm{abs}>0$ and $\epsilon_\mathrm{rel}>0$ are user defined
quantities that control the accuracy of the solution. For the purposes of
our experimental results we set $\epsilon_\mathrm{abs}=10^{-3}$ and
$\epsilon_\mathrm{rel}=10^{-4}$.  OSQP and COSMO have analogous quantities for
the primal and dual residual, however, they do not allow the user to specify a bound
on the gap. Therefore, in order to ensure that the gap is below the desired
tolerance we solve each problem with these solvers with the initial choices of
$\epsilon_\mathrm{abs}$ and $\epsilon_\mathrm{rel}$ and check if the gap is
below the tolerance. If it is then we return that solution, otherwise we halve
$\epsilon_\mathrm{abs}$ and $\epsilon_\mathrm{rel}$ and re-solve. This procedure
is continued until the solver returns a solution that satisfies the gap
constraint, and only the last solve counts towards the statistics. For a
concrete case of why this is necessary take the \verb|BOYD2| problem from the
Maros-M{\'e}sz{\'a}ros QP dataset. OSQP returns the certificate `solved' for
this problem after $24300$ iterations with an `optimal' objective of $343.32$.
However, the true optimal objective value for this problem is $21.26$
\cite{maros1999repository}. The issue is that the duality gap of the primal-dual
pair returned by OSQP is $1.3\times 10^3$, when the desired gap is on the order
of $10^{-2}$.  Since the primal and dual residuals are small but the duality gap
is large it means that OSQP has returned a primal-dual pair that is (almost)
feasible, but is far from optimal.  On the other hand SCS v3.0, which only
terminates when the gap as well as the primal and dual residuals are below the
tolerance, returns a solution after $3250$ iterations with an objective value of
$21.12$, significantly closer to the true value.

Since the cone memberships are always guaranteed by the iterates, SCS v3.0
declares a problem infeasible when it finds $y \in \reals^m$ that satisfies
\[
\begin{array}{lr}
b^\top y = -1, & \|A^\top y\|_\infty < \epsilon_\mathrm{infeas}.
\end{array}
\]
Similarly, SCS v3.0 declares dual infeasibility when it finds $x \in
\reals^n$, $s \in \reals^m$ that satisfy
\[
\begin{array}{lr}
c^\top x = -1, & \max(\|P x\|_\infty, \|A x + s\|_\infty) <
\epsilon_\mathrm{infeas}.
\end{array}
\]
The other solvers have analogous certificates, and in these cases there is no
duality gap so the iterative procedure is not required.
For the experiments we set $\epsilon_\mathrm{infeas} = 10^{-4}$.

All three solvers rescale the data to yield better conditioning and they all
implement a heuristic `step-size' adaptation scheme.  These heuristics were
enabled for these experiments, however we note that the conclusions we derive
from the experiments did not change when these heuristics were disabled. On the
contrary, the advantage that the homogeneous embedding had over the direct
approaches was more pronounced in that case.  We disabled more advanced
techniques like acceleration, solution polishing, and semidefinite cone
decomposition.  All three solvers were given a maximum iteration limit of $10^5$
and a time-limit of $10^3$ seconds per problem. If a solver fails to find a
solution or a certificate of infeasibility satisfying the tolerances within
those limits then it is considered to have failed to solve that problem. When
measuring average run-times any failures are assigned the maximum run-time of
$10^3$ seconds. All experiments were run single-threaded on a 2017 MacBook pro
with a $3.1$Ghz Intel i7 and $16$Gb of RAM.

We present results on several datasets. First we present results on the
Maros-M{\'e}sz{\'a}ros dataset of challenging convex feasible QPs
\cite{maros1999repository}.  Next, the NETLIB dataset, which contains both
feasible and infeasible linear programs \cite{netlib}. The SDPLIB dataset also
has 4 infeasible problems, on which we test SCS v3.0 and COSMO \cite{sdplib}
(OSQP does not support the semidefinite cone).
Finally, we present results on randomly generated quadratic problems as in the
previous section.
To summarize the results for each dataset we shall use Dolan-Mor{\'e}
performance profiles \cite{dolan2002benchmarking}. In these plots each point of
the curve corresponds to what fraction of the problems are solved ($y$-axis)
within a factor ($x$-axis) of the fastest solver for each problem. Curves of
faster solvers appear above those of slower solvers.
When summarizing wall-clock performance we shall use the shifted geometric means
of the run-times with a shift of $10$ seconds, denoted \verb|sgm10|.

In Figure~\ref{f-maros-meszaros} we show the Dolan-Mor{\'e} profile for the
Maros-M{\'e}sz{\'a}ros QP dataset and in Table \ref{t-maros meszaros
problems-failure} we present the failure rates. SCS v3.0 is the most robust
solver, with around a third of the failures of the next best solver. In terms of
solve speeds SCS v3.0 was the fastest, followed by COSMO which was about
$2.6\times$ slower and then OSQP which was about $2.8\times$ slower, as measured
by \verb|sgm10|.

In Figure~\ref{f-netlib} we show the profiles for the NETLIB dataset, broken
down into feasible and infeasible problems. In this case it is clear that
SCS v3.0 is the fastest solver. This is partially explained by the fact that
SCS v3.0 appears to be far more robust for these problems with a
significantly lower overall failure rate, as shown in Tables \ref{t-netlib
infeasible-failure} and \ref{t-netlib feasible-failure}.  For the feasible
problems SCS v3.0 was about $16\times$ faster than OSQP and $20\times$ faster than
COSMO as measured by \verb|sgm10|.  For the infeasible problems SCS v3.0 was
about $3.2\times$ faster than OSQP and $20\times$ faster than COSMO.

The results for all four infeasible SDPLIB problems are given in table
\ref{t-sdplib infeasible-results}. Both SCS v3.0 and COSMO successfully
certified that these problems were infeasible (primal or dual depending on the
problem), but SCS v3.0 was able to certify infeasibility significantly faster
than COSMO, about $66\times$ faster in terms of \verb|sgm10|.  This difference
is partially explained by the number of iterations required to generate a
certificate. COSMO required almost $10\times$ the number of iterations of SCS
v3.0 to certify that these problems were infeasible.

Finally, the results for the random QPs are presented in Figure \ref{f-random}.
For feasible random problems SCS v3.0 and OSQP have similar performance, with
OSQP about $4\%$ faster than SCS v3.0 on average, and COSMO somewhat slower.
All three solvers solved all feasible instances.  However, for
the randomly generated infeasible and unbounded problems the difference is
stark.  For unbounded problems SCS v3.0 certified every single problem correctly,
OSQP had a $0.8\%$ failure rate and COSMO had a $1.2\%$ failure rate.  However,
SCS v3.0 was about $32\times$ faster than OSQP and $41\times$ faster than COSMO,
as measured by \verb|sgm10|.  For the infeasible problems again SCS v3.0 was able
to certify infeasibility on all problems, OSQP on all but one problem, but COSMO
was unable to certify infeasibility on even a single instance, hitting the
maximum iteration limit on every problem.  Even when the infeasibility
tolerances were loosened COSMO still struggled. This explains the strange
Dolan-Mor{\'e} profile for this problem set, where SCS v3.0 is barely visible at
the top left and COSMO barely visible in the bottom right. Even though OSQP and
SCS v3.0 had similar success rates, SCS v3.0 was able to certify infeasibility
about $29\times$ faster as measured by \verb|sgm10|.

\begin{figure}[h]
\centering
\includegraphics[width=0.5\linewidth]{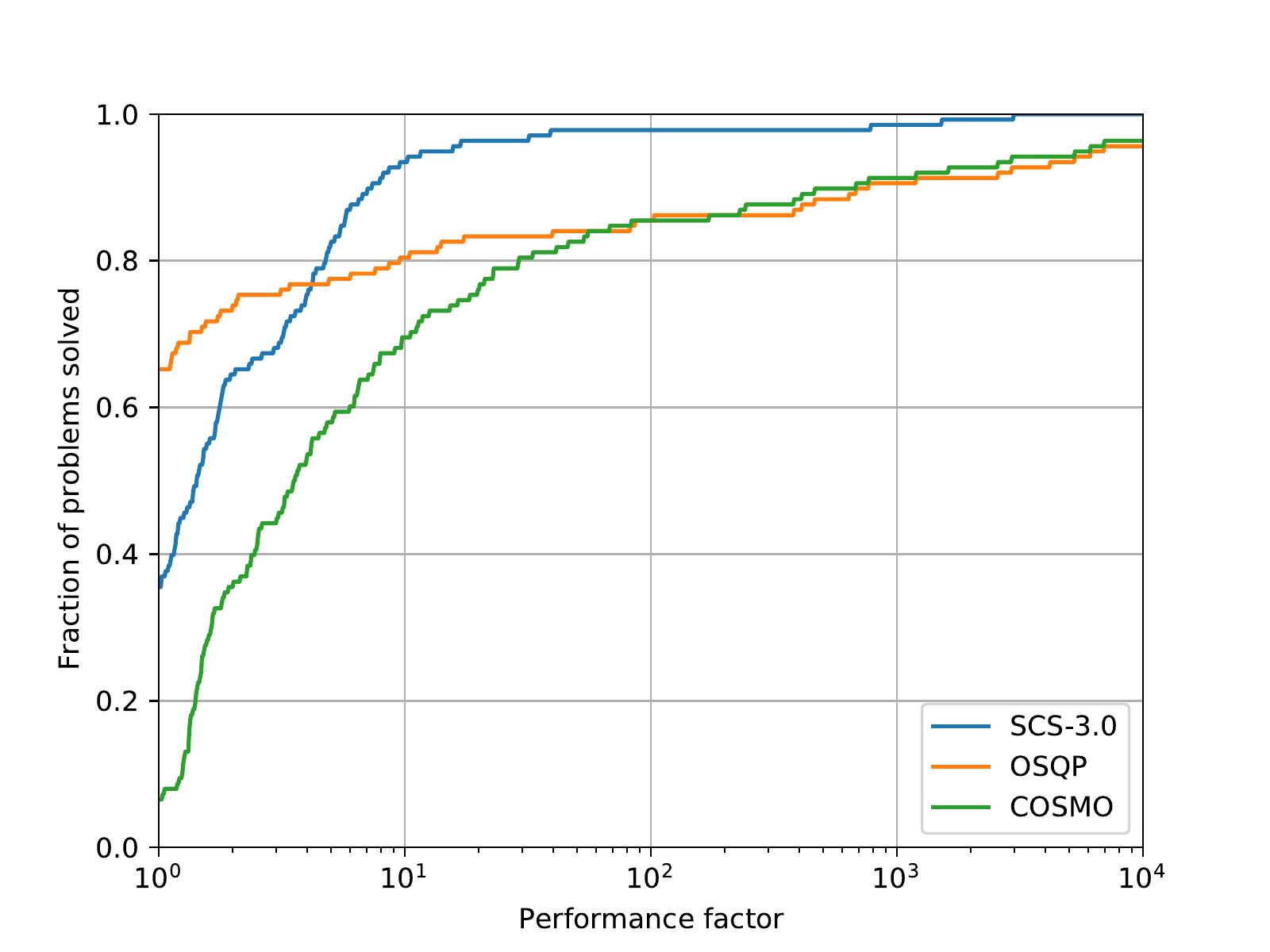}
\caption{Performance profiles for Maros-M{\'e}sz{\'a}ros problems.}
\label{f-maros-meszaros}
\end{figure}

\begin{table}
\centering
\caption{Solver failure rates on Maros-M{\'e}sz{\'a}ros problems.}
\label{t-maros meszaros problems-failure}
\begin{tabular}{lccc}
\toprule
 SCS-3.0 &  OSQP &  COSMO \\
\midrule
5.80\% & 18.12\% &  16.67\% \\
\bottomrule
\end{tabular}
\end{table}

\begin{figure}[h]
\centering
\begin{subfigure}{0.49\textwidth}
\centering
\includegraphics[width=\linewidth]{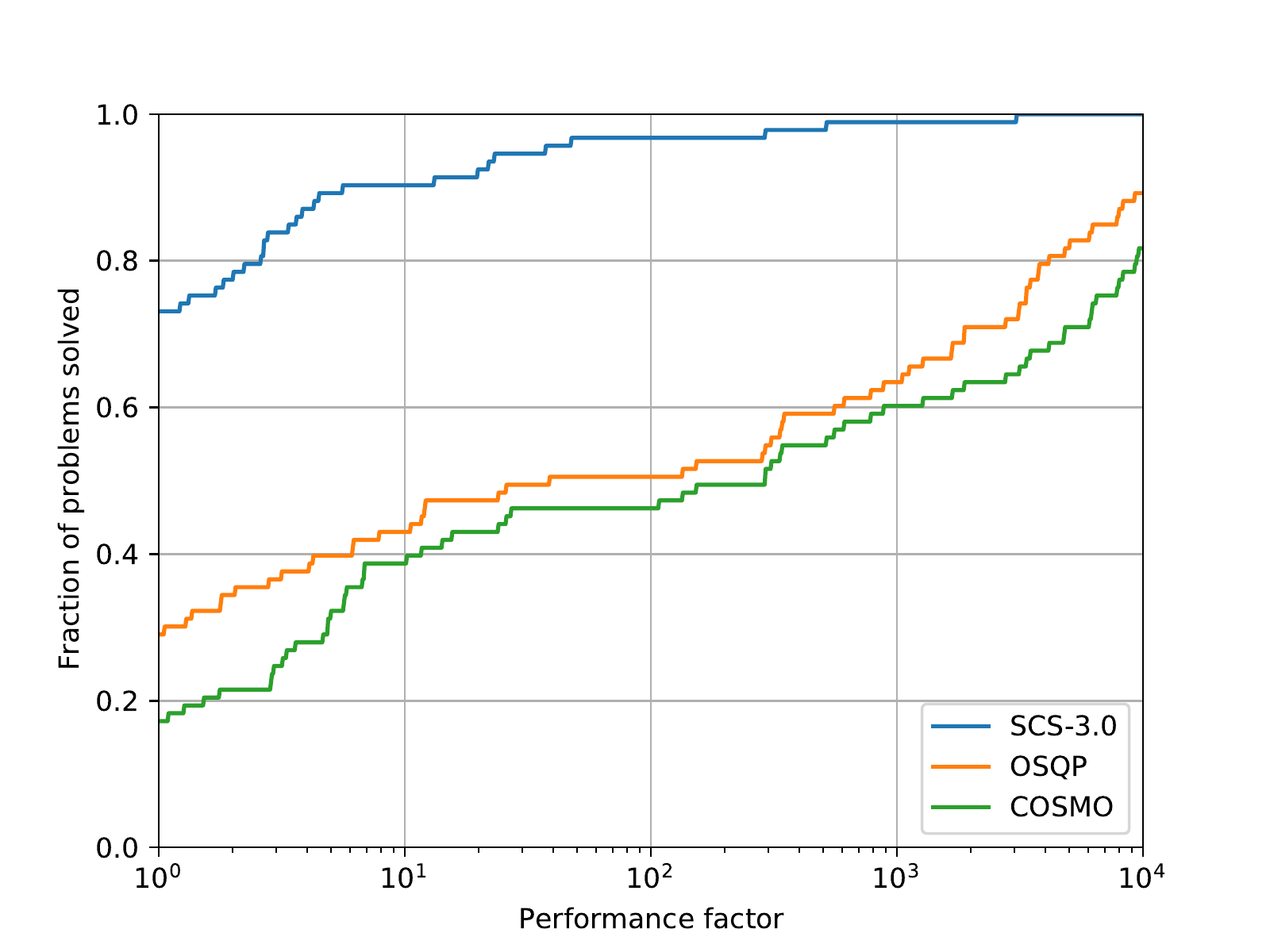}
\caption{Feasible problems.}
\label{f-netlib-feasible}
\end{subfigure}
\begin{subfigure}{0.49\textwidth}
\centering
\includegraphics[width=\linewidth]{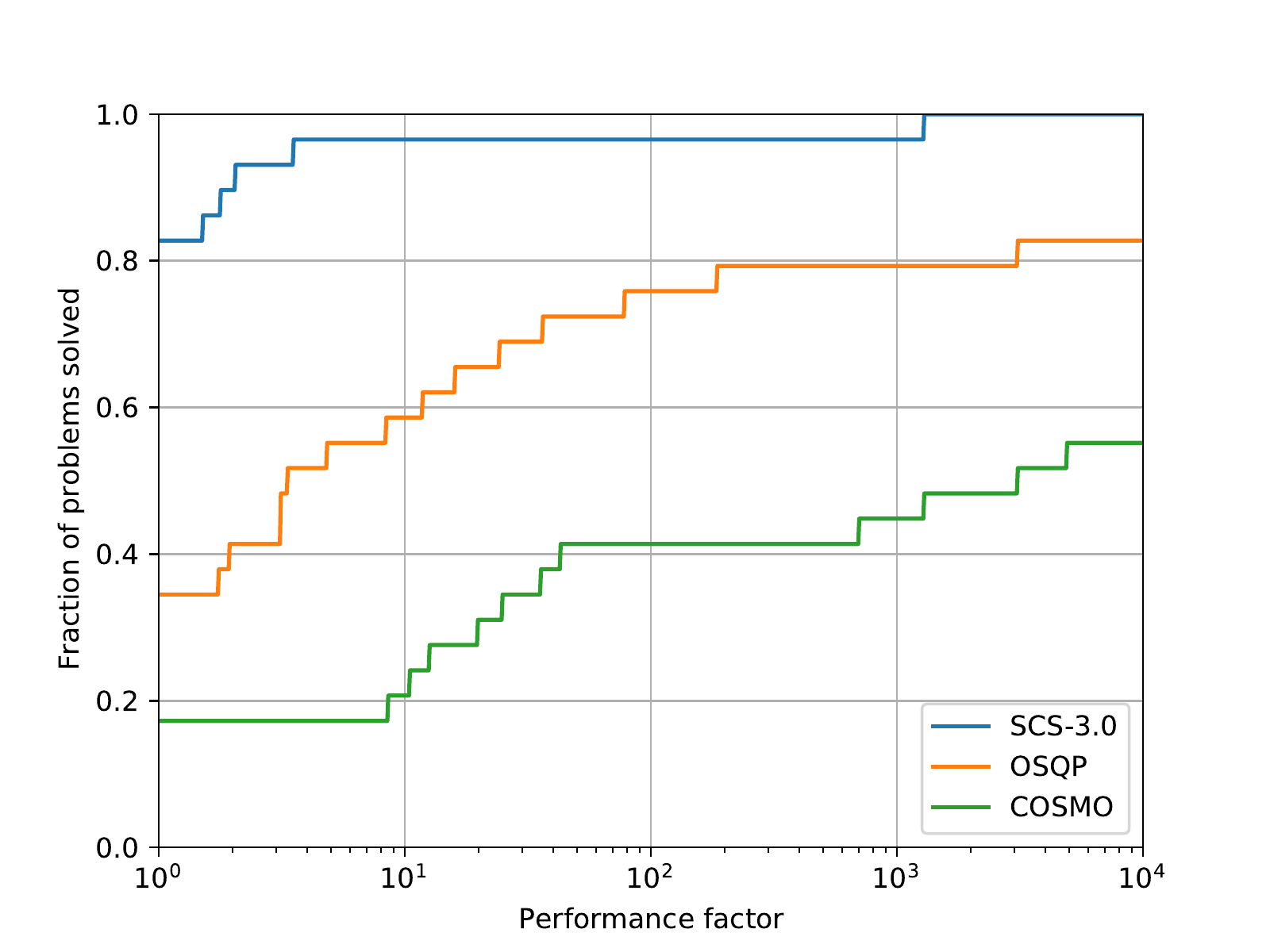}
\caption{Infeasible problems.}
\label{f-netlib-infeasible}
\end{subfigure}
\caption{Performance profiles for NETLIB LP problems.}
\label{f-netlib}
\end{figure}
\begin{table}
\centering
\caption{Solver failure rates on NETLIB infeasible problems.}
\label{t-netlib infeasible-failure}
\begin{tabular}{lccc}
\toprule
  SCS-3.0 &  OSQP &  COSMO \\
\midrule
20.69 \% & 37.93\% &  75.86\% \\
\bottomrule
\end{tabular}
\end{table}

\begin{table}
\centering
\caption{Solver failure rates on NETLIB feasible problems.}
\label{t-netlib feasible-failure}
\begin{tabular}{lccc}
\toprule
  SCS-3.0 &  OSQP &  COSMO \\
\midrule
12.90 \% & 61.29\% &  65.59\% \\
\bottomrule
\end{tabular}
\end{table}

\begin{table}
\centering
\caption{Solver times on SDPLIB infeasible problems in seconds.}
\label{t-sdplib infeasible-results}
\begin{tabular}{lcc}
\toprule
{} &  SCS-3.0 &  COSMO \\
\midrule
infd1 & 0.0122 & 2.1776 \\
infd2 & 0.0155 & 0.0321 \\
infp1 & 0.0035 & 0.1154 \\
infp2 & 0.0037 & 0.1119 \\
\bottomrule
\end{tabular}
\end{table}

\begin{figure}[h]
\centering
\begin{subfigure}{0.32\textwidth}
\centering
\includegraphics[width=\linewidth]{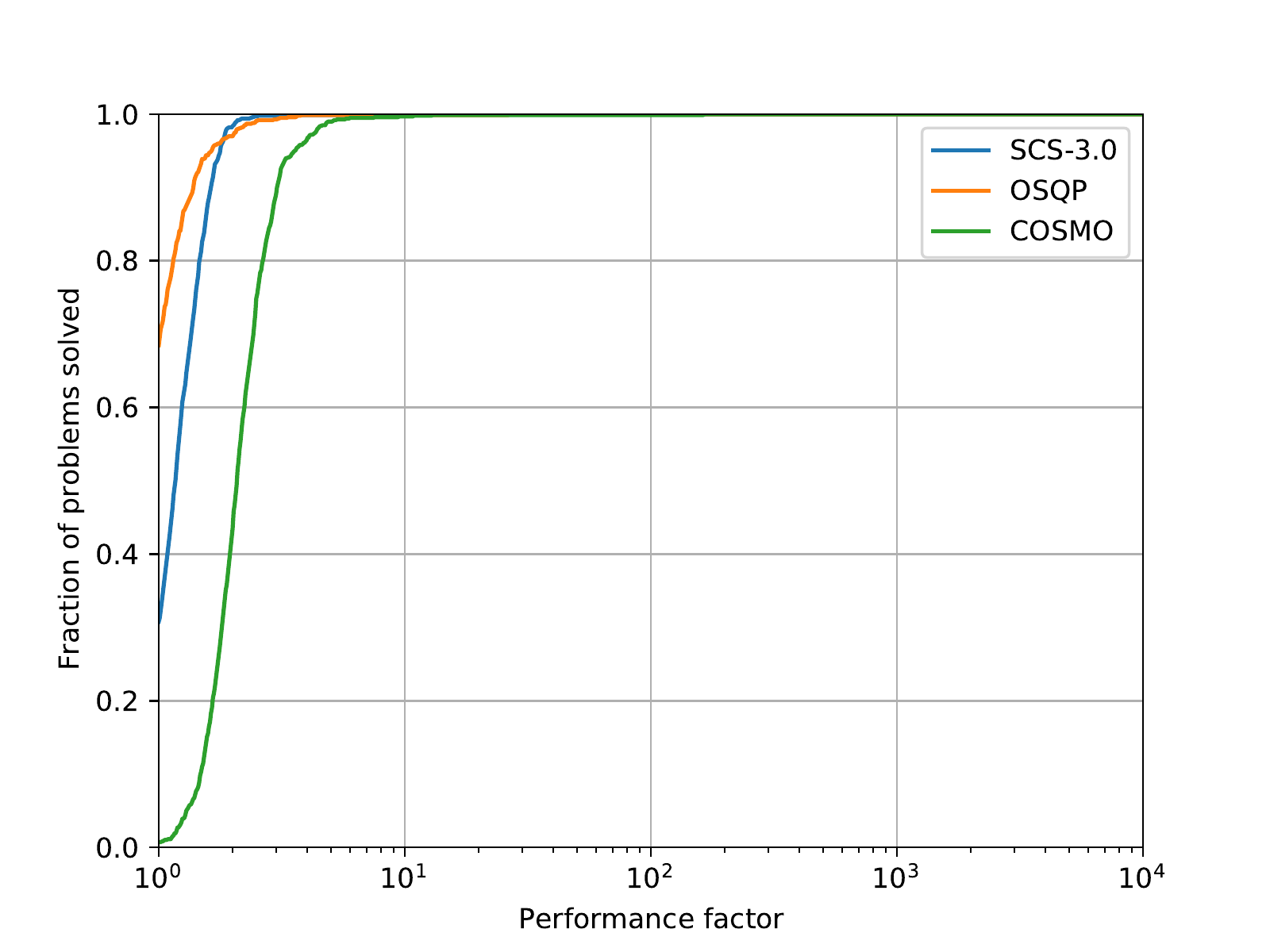}
\caption{Feasible problems.}
\label{f-random-feasible}
\end{subfigure}
\begin{subfigure}{0.32\textwidth}
\centering
\includegraphics[width=\linewidth]{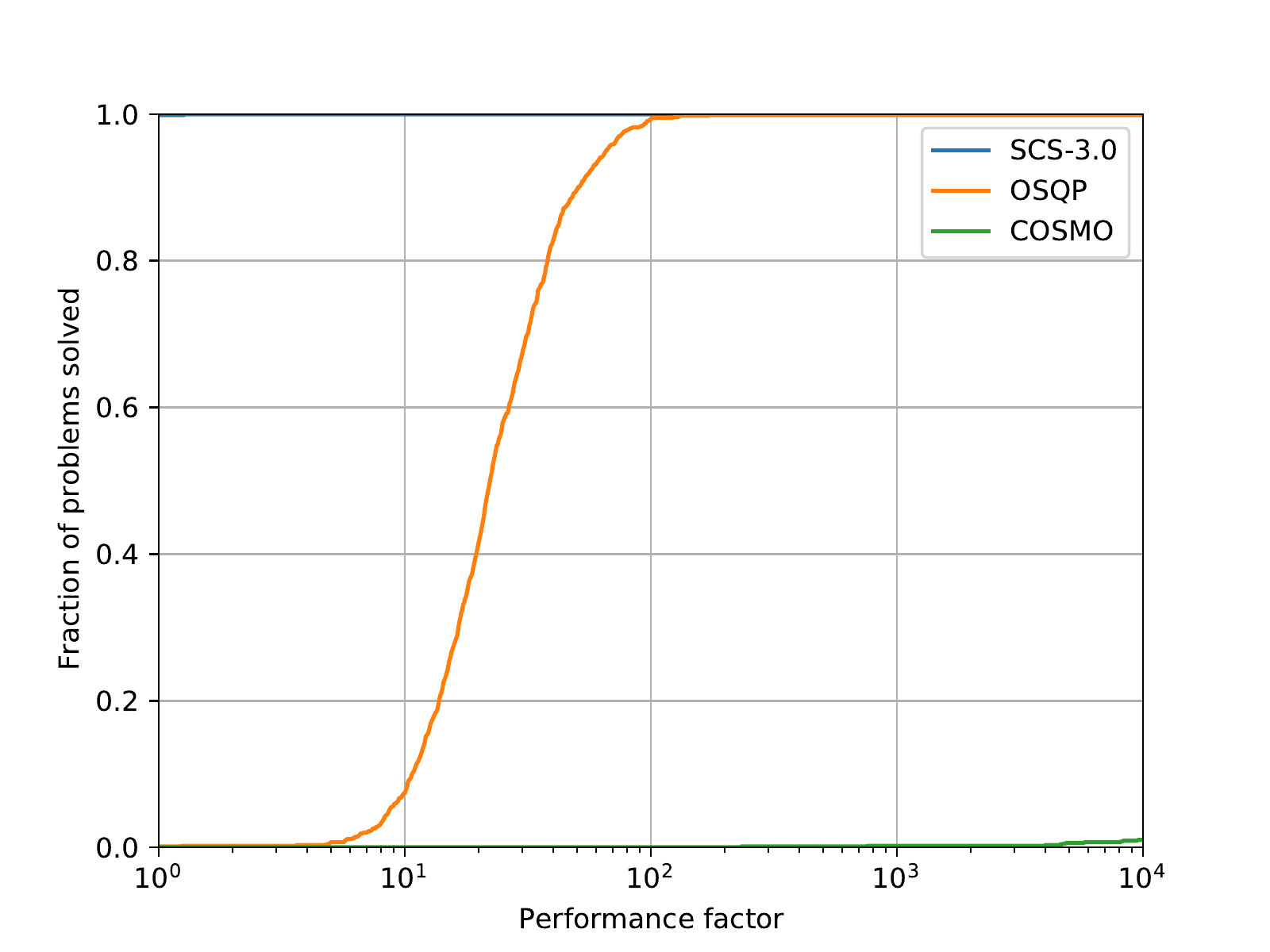}
\caption{Infeasible problems.}
\label{f-random-infeasible}
\end{subfigure}
\begin{subfigure}{0.32\textwidth}
\centering
\includegraphics[width=\linewidth]{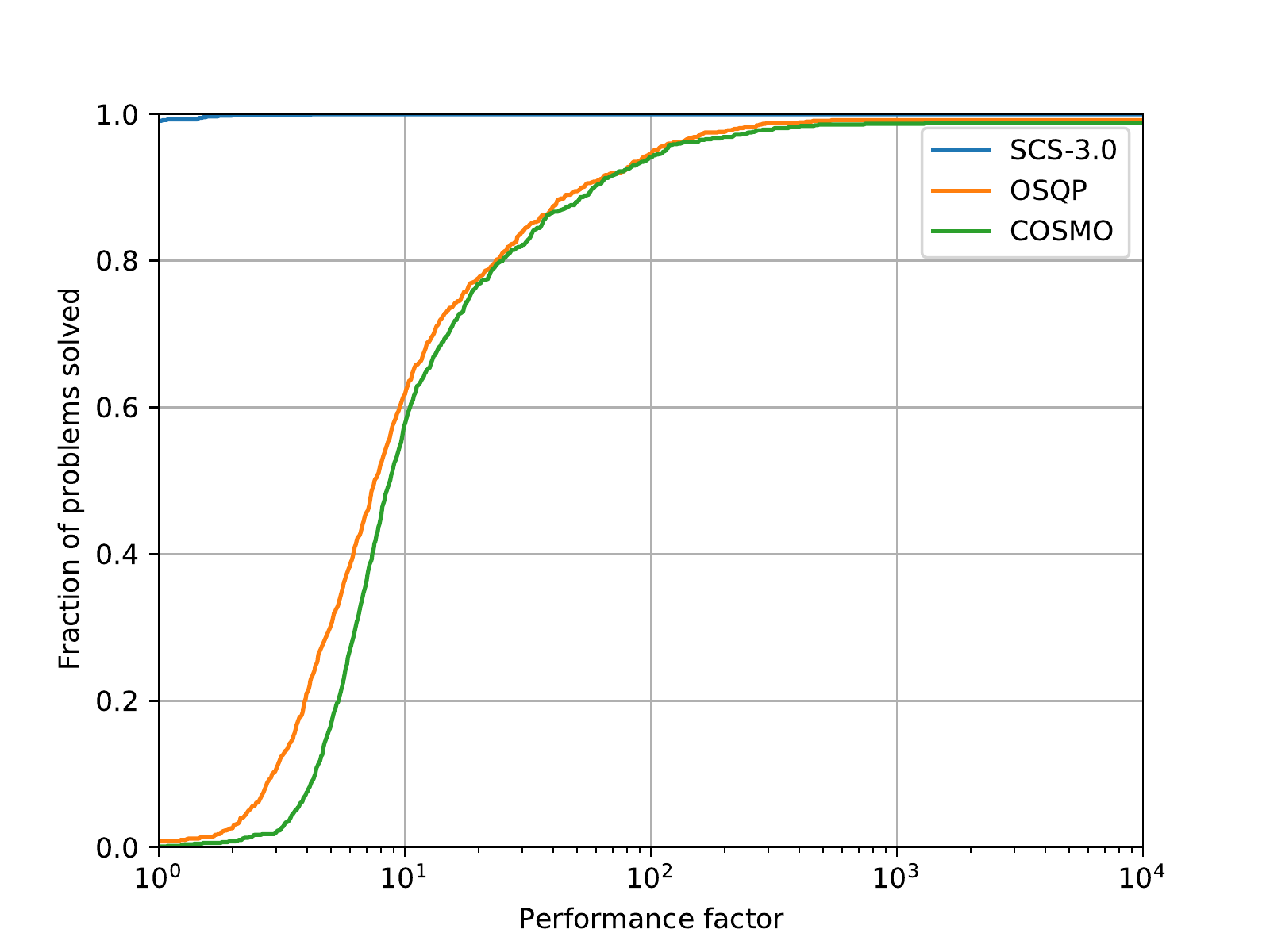}
\caption{Unbounded problems.}
\label{f-random-unbounded}
\end{subfigure}
\caption{Performance profiles for randomly generated QP problems.}
\label{f-random}
\end{figure}

\section{Conclusion}
We applied Douglas-Rachford splitting to a homogeneous embedding of the linear
complementarity problem (LCP). This resulted in a simple alternating procedure
in which we solve a linear system and project onto a cone at each iteration.
Since the linear system does not change from one iteration to the next we can
factorize the matrix once and cache it for use thereafter.  Our procedure is
able to return the solution to the LCP when one exists, or a certificate of
infeasibility otherwise. Quadratic cone programs (QCP) are an important special
case of LCPs and we discussed how to implement the procedure efficiently for
QCPs in detail.  We concluded with some experiments demonstrating the
advantage of our procedure over competing approaches numerically, showing large
speedups for infeasible problems without sacrificing performance on feasible
problems. The algorithm has been implemented in C and is available as an
open-source QCP solver.

\section*{Acknowledgments}
The author would like to thank his friends and colleagues at DeepMind for their
support and encouragement. He is also deeply indebted to three anonymous
referees for their careful reviews and excellent feedback.

\bibliographystyle{siamplain}
\bibliography{homogeneous_vi}

\end{document}